\algrenewcommand{\algorithmiccomment}[1]{\hskip1em \textit{//#1}}
\def\E{{\mathbb E}}
\def\P{{\mathbb P}}
\def\Q{{\mathbb Q}}
\def\R{{\mathbb R}}
\def\N{{\mathbb N}}
\def \cE{\mathcal{E}}
\def \cF{\mathcal{F}}
\def \cD{\mathcal{D}}
\def \cC{\mathcal{C}}
\def \cP{\mathcal{P}}
\def \cL{\mathcal{L}}
\def \cN{\mathcal{N}}
\def \cT{\mathcal{T}}
\def \probinv{{\overline{\mathbb{Q}}}}
\def\ovt{\overline{t}}
\def\expp#1{\mathop {\mathrm{e}^{ #1}}}
\def\Var{\mathop{{\mathbb V}\mathrm{ar}}}
\def\ind#1{{\mathbf 1}_{\{#1\}}}
\def\cite{\citet}
\def\keywords#1{{\bf Keywords~:~}#1}
\numberwithin{equation}{section}
\theoremstyle{plain}
\newtheorem{thm}{Theorem}
\newtheorem{prop}[thm]{Proposition}
\newtheorem{lemma}[thm]{Lemma}
\newtheorem{corollary}[thm]{Corollary}
\newtheorem{remark}[thm]{Remark}
\newtheoremstyle{hypo@style}{\topsep}{15pt}{\itshape}{0pt}{\bfseries}{}{5pt}
{\thmname{#1}\thmnumber{ #2}\thmnote{ {\mdseries (#3)}}}
\theoremstyle{hypo@style}
\newtheorem{hypo}{Hypothesis}
\begin{document}

\title{Stochastic Local Intensity Loss Models with Interacting Particle Systems}

\author{
Aurélien Alfonsi \footnote{Email:alfonsi@cermics.enpc.fr}\\
Université Paris-Est, CERMICS,\\
 Project Team MathRisk ENPC-INRIA-UMLV, Ecole des
Ponts,\\ 6-8 avenue Blaise Pascal, 77455 Marne-la-Vallée, France\vspace*{0.2cm}\\
Céline Labart \footnote{Email:celine.labart@univ-savoie.fr}\\
Laboratoire de Mathématiques, CNRS UMR 5127\\
Université de Savoie, Campus Scientifique\\
73376 Le Bourget du Lac, France\vspace*{0.2cm}\\
 Jérôme Lelong \footnote{Email:jerome.lelong@imag.fr}\\
 Univ. Grenoble Alpes, Laboratoire Jean Kuntzmann,  \\
  BP 53, 38041 Grenoble, Cedex 09, France}

\date{\today}

\maketitle

\begin{abstract} 
  It is well-known from the work of~\cite{Schonbucher} that the marginal laws of a loss process can be matched by a unit increasing time inhomogeneous Markov process, whose deterministic jump intensity is called local intensity. The Stochastic Local Intensity (SLI) models such as the one proposed by~\cite{Arnsdorf} allow to get a stochastic jump intensity while keeping the same marginal laws. These models involve a non-linear SDE with jumps. The first contribution of this paper is to prove the existence and uniqueness of such processes. This is made by means of an interacting particle system, whose convergence rate towards the non-linear SDE is analyzed. Second, this approach provides a powerful way to compute pathwise expectations with the SLI model: we show that the computational cost is roughly the same as a crude Monte-Carlo algorithm for standard SDEs. 

  \noindent  \keywords{ Stochastic local intensity model, Interacting particle systems, Loss
  modelling, Credit derivatives, Monte-Carlo Algorithm, Fokker-Planck equation, Martingale problem.} \\
  \textbf{AMS classification (2010): 91G40, 91G60, 60H35} 
\end{abstract}

{{\bf Acknowledgments:} We would like to thank Benjamin Jourdain (Université Paris-Est) for fruitful discussions having stimulated this research. We would like to thank INRIA Paris Rocquencourt for allowing us to run our numerical tests on the convergence rate on their RIOC cluster. A.~Alfonsi acknowledges the support of the ``Chaire Risques Financiers'' of Fondation du Risque. C.~Labart and J.~Lelong acknowledge the support of the Finance for Energy Market Research Centre, www.fime-lab.org.}

\newpage

\section{Introduction}

In equity modeling, a major concern is to get a model that fits 
option data. It is well-known from the work of~\cite{Dupire} that
basically European options can be exactly calibrated by using local volatility
models~$\sigma(t,x)$. However, local volatility models are known to have some
inadequacy to describe real markets. To get richer dynamics, Stochastic Local
Volatility (SLV) models have been introduced (see~\cite{Alexander} or~\cite{Piterbarg}) and consider the following 
dynamics for the stock under a risk-neutral probability:
$$dS_t=rS_tdt +f(Y_t) \eta(t,S_t) S_t dW_t,$$
where $Y_t$ is an adapted stochastic process. Typically, $(Y_t,t\ge 0)$ is
assumed to solve an autonomous one dimensional~SDE whose Brownian motion may
be correlated with~$W$.
From the work of~\cite{Gyongy}, we know that under mild
assumptions, the following choice
$$ \eta(t,x)= \frac{\sigma(t,x)}{\sqrt{\E[f(Y_t)^2|S_t=x]}}$$
ensures that $S_t$ has the same marginal laws as the local volatility model
with~$\sigma(t,x)$, which automatically gives the calibration to European
option prices. This leads to the following non-linear SDE
$$dS_t=rS_tdt +\frac{f(Y_t)} {\sqrt{\E[f(Y_t)^2|S_t]}}\sigma(t,S_t) S_t
dW_t.$$
Here, we stress that the law of $(Y_t,S_t)$ steps into the diffusion term. Unless
for trivial choices of~$Y_t$ and despite some attempts (\cite{Abergel}), getting the existence and uniqueness of solutions for  this kind of SDE remains an open problem. Also, from a numerical perspective, the simulation of SLV models is not easy, precisely because of the computation of the conditional expectation.

In this paper, we propose to tackle a very analogous problem arising in
credit risk modeling. In all the paper, we will work under a risk-neutral 
probabilistic filtered space $(\Omega,(\cF_t)_{t\ge 0}, \cF, \P)$. As usual,
$\cF_t$ is the $\sigma$-field describing all the events that can occur
before time $t$ and $\cF$ describes all the events. We consider
$M\in \N^*$~defaultable entities (for example, $M=125$ for the iTraxx). We assume that all the recovery rates are
deterministic and equal to $1- L_{GD}$ for all the firms within the
basket. The loss process $(L_t,t\ge0)$ is given by
\begin{align*}
  L_t=\frac{L_{GD}}{M}X_t, \; \forall t \ge 0
\end{align*}
where $X_t$ is the number of defaults up to time $t$. Clearly, $X$ takes values in
$\cL_M:=\{0,\dots,M\}$. Thanks to the assumption of deterministic recovery
rates, and under the assumption of deterministic short interest rates, it is
well known that CDO tranche prices only depend on the marginal laws of the
loss process (see for example Remark 3.2.1 in~\cite{Alfonsi}). Let us
assume then for a while that we have found marginal laws
$(P(X_t=k),k\in\cL_M)_{t\in [0,T]}$ which perfectly fit CDO tranche
prices up to maturity~$T>0$. Then, under some mild assumptions, we know from
\cite{Schonbucher} that there exists a non-homogeneous Markov
chain with only unit increments which exactly matches these
marginal laws. Somehow, this result plays the same role for the loss as Dupire's
result for the stock.

The loss model obtained with a non-homogeneous unit-increasing Markov chain is
known in the literature as local intensity model. It is fully described by the 
local intensity $\lambda:\R_+ \times \cL_M \rightarrow \R_+$, which gives the
instantaneous rate of having one more default in the basket. In the sequel, we
will assume that the local intensity $\lambda:\R_+ \times \cL_M
\rightarrow \R_+$ has been calibrated to market data and perfectly matches
Index and CDO tranche prices. We make the following assumptions:
\begin{itemize}
\item $\forall x \in \cL_M , t \in \R \mapsto \lambda(t,x)$ is a càdlàg function,
\item $\forall t\ge 0, \ \lambda(t,M)=0$.
\end{itemize}
In particular, we have $\overline{\lambda}=\max_{x\in \cL_M}
\sup_{t\in[0,T]}\lambda(t,x) < \infty$.
In this setting, the instantaneous jump rate at time $t$ from~$x$ to~$x+1$ is given by
$\lambda(t-,x)$. Thus, the local intensity model corresponds to a time-inhomogeneous Markov chain making unit jumps with this rate. 

One may like however get richer dynamics than the ones given by the local
intensity model. Then, we can proceed in the same way as for the stochastic
volatility models. Let us consider $(Y_t)_{t\ge 0}$ a general
$(\cF_t)$-adapted càdlàg real process, a function $f:\R \rightarrow \R_+$ and a
function $\eta:\R_+\times \cL_M \rightarrow \R_+$ satisfying the same
assumptions as~$\lambda$. We assume that the default
counting process $(X_t,t \ge 0)$ has jumps of size~$1$ with the rate
\begin{align*}
  \eta(t-,X_{t-})f(Y_{t-}).
\end{align*} By analogy with the equity, we name this kind of model a
Stochastic Local Intensity (SLI) model. Then, it is known (see
\cite{ContMinca}) that the local intensity model with the Local Intensity (LI)
$\eta(t-,x)\E[f(Y_{t-})|X_{t-}=x]$ has the same marginal laws as~$X_t$. Thus,
the SLI model will be automatically calibrated to CDO tranche prices if one
takes:
$$\forall t>0, x \in \cL_M, \
\eta(t-,x)=\frac{\lambda(t-,x)}{\E[f(Y_{t-})|X_{t-}=x]}. $$ This approach has
been used in the literature by~\cite{Arnsdorf}, and in a slightly different
way by~\cite{Lopatin}. However, up to our knowledge there is no proof in the
literature of the existence nor uniqueness of such a dynamics.

The first scope of this paper is to solve this problem. At this stage,
we need to make our framework precise. We assume through the
paper that: 
\begin{equation}\label{assump_f} f:\R\mapsto\R \text{ is continuous, s.t. }
  \forall x \in \R, \ 0< \underline{f}\le f(x) \le \overline{f}<\infty.
\end{equation}
We assume that  the probability space $(\Omega, \mathcal{F}, \P)$ contains a
 standard Brownian motion $(W_t,t\ge 0)$, a sequence of independent uniform
 random variables $(U^k)_{k\in \N}$, and a sequence $(E^n)_{n\in \N}$ of
 independent exponential random variables with
 parameter~$\frac{\overline{\lambda}\overline{f}}{\underline{f}}$. We set
 $T^k=\sum_{n=1}^kE^n$ for $k\in \N^*$. The random variables
 $(T^k,U^k)_{k\in \N^*}$ will enable us to define a non homogeneous Poisson point process
with jump intensity $\eta(t-,X_{t-})f(Y_{t-})$. 
We are interested in studying the following two problems in which we assume that
$Y_t$ is a process with values either in~$\N$ (discrete
case) or in~$\R$ (continuous case). In the discrete case, we are interested in
finding a predictable process $(X_t,Y_t)_{t\ge 0}$ such that
\begin{equation}\label{discrete}
\begin{cases} X_t=x_0+\sum_{k, T^k\le t} \mathbf{1}_{\left\{U^k
   \le\frac{\underline{f}}{\overline{\lambda}\overline{f}}
   \frac{f(Y_{T^k-})\lambda(T^k-,X_{T^k-})}{\E[f(Y_{T^k-})|X_{T^k-}]}\right\}}\\
 Y_0=y_0, \text{ and for each }k\ge 0,\ (Y_t, t \in [T_k,T_{k+1}) ) \text{ is a continuous time Markov chain}\\
\text{with transition rate } \mu^{X_t}_{ij}=\mu^{X_{T_k}}_{ij}.
\end{cases}
\end{equation}
For the sake of simplicity, we consider in the discrete setting that $X$ and $Y$ do not jump together almost surely. 

In the continuous case, the corresponding problem is to solve the following
stochastic differential equation:
\begin{equation}\label{continuous}
\begin{cases}
  X_t=x_0+\sum_{k, T^k\le t} \mathbf{1}_{\left\{U^k \le\frac{\underline{f}}{\overline{\lambda}\overline{f}} \frac{f(Y_{T^k-})\lambda(T^k-,X_{T^k-})}{\E[f(Y_{T^k-})|X_{T^k-}]}\right\}}\\
  Y_t=y_0+\int_0^t b(s,X_s,Y_s)ds +\int_0^t \sigma (s,X_s,Y_s)dW_s + \int_0^t \gamma(s-,X_{s-},Y_{s-})dX_s,
\end{cases}
\end{equation}
for $x_0 \in \cL_M,y_0 \in \R$ and given real functions $b$, $\sigma$
and~$\gamma$. This framework embeds in particular the dynamics suggested by
\cite{Arnsdorf} and~\cite{Lopatin}.  Under some rather mild hypotheses on
$\mu_{ij}$, $b$, $\sigma$ and~$\gamma$, which will be specified in the
corresponding sections, we will show that the above two equations admit a
unique solution. In the discrete case, we are able to show that the
corresponding Fokker-Planck equation has a unique solution. This can be
achieved by writing the Fokker-Planck equation as an ODE which can be studied
directly. In the continuous case, this approach can hardly been extended: the
Fokker-Planck equation leads to a non trivial PDE. Instead, we solve this
problem by introducing an interacting particle system. This technique is known
to be powerful for this type of non linear problems (see~\cite{Sznitman}
or~\cite{Meleard}).

The second scope of this paper is to provide a way to compute prices under SLI
models. Indeed, interacting particle systems are not only theoretical tools to
prove existence and uniqueness results for such equations. They give a very
smart way to simulate these processes, therefore enabling us to run Monte-Carlo
algorithms. This approach has been recently used by~\cite{GHL} for Stochastic
Local Volatility models. For the Stochastic Local Intensity models considered
in this work, the conditional expectation is much simpler to handle. This
enables us to get theoretical results on the convergence and also simplifies
the implementation.  In fact, we show in our case under some assumptions that
the rate of convergence to estimate expectations is in~$O(1/N^\alpha)$ for any
$\alpha<1/2$, where $N$ is the number of particles. On our numerical
experiments, we even observe on several examples a convergence which is similar
to the one of the Central Limit Theorem, which is rather usual for Interacting
Particle Systems. Besides, we show that we can simulate the interacting
particle system with a computational cost in $O(DN)$, where $D$ is the number
of time steps for the discretization of the SDE on~$Y$. Thus, the computational
cost is roughly the same as a crude Monte-Carlo algorithm for standard SDEs
with $N$ samples. \\

The paper is organized as follows. First, we study the case where $Y$ has
discrete values; this framework enables us to settle the problem and solve
it by rather elementary tools. This part is independent from the rest of the
paper. Second, by means of a particle system approach, we investigate the case
where $Y$ is real valued jump diffusion. Finally, we carry out numerical
simulations highlighting the relevance of the particle system technique to
compute pathwise expectation of the Process~\eqref{continuous}.

\section{The SLI model when~$Y$ takes discrete values}

The goal of this section is to prove the existence of a process $(X_t,Y_t)_{t \ge 0}$ satisfying~\eqref{discrete}. Unlike the continuous case~\eqref{continuous}, we can get this result by elementary means, without resorting to an interacting particle system. To do so, we write the Fokker-Planck equation associated to the process $(X_t,Y_t)_{t \ge
  0}$, which should be satisfied by $\P(X_t=i,Y_t=j)$ for  $(i,j) \in \cL_M\times \N$. We have
$$(\mathcal{E})
\left\{
\begin{array}{lcl}
    \partial_t p(t,i,j) & = & \sum_{k \neq j}
    \mu^i_{kj} p(t,i,k)+\ind{i \ge
      1}\frac{\lambda(t,i-1)}{\varphi_p(t,i-1)}f(j)p(t,i-1,j)\\
     & &-\left(\frac{\lambda(t,i)}{\varphi_p(t,i)}f(j)\ind{i\le
      M-1}-\mu^i_{jj}\right)p(t,i,j)
   \\
    p(0,i,j)&  = & 0 \;\forall (i,j)\neq (x_0,y_0), \\
    p(0,x_0,y_0)& = & 1.
\end{array}\right.
 $$
where $p$ is a function from $\R_+ \times \cL_M \times \N$ to $\R$ and
\begin{align*}
  \varphi_p(t,i)=\frac{\sum_{j=0}^{\infty}f(j)p(t,i,j)}{\sum_{j=0}^{\infty}p(t,i,j)}.
\end{align*}
 If we manage to prove that the Fokker-Planck equation admits a
unique solution $p$ such that
\begin{align}
  &\forall i,j \in \cL_M \times \N, \forall t \ge 0, \;p(t,i,j) \ge 0, \label{eq1}\\
& \forall t \ge 0 \;\sum_{i=0}^M \sum_{j=0}^{\infty} p(t,i,j)=1,\label{eq2}
\end{align}
then we will get that the law of a process $(X_t,Y_t)_{t \ge 0}$ satisfying~\eqref{discrete} is unique. Besides, we will also get the existence of such a process. It is easy to check that a continuous Markov chain $(X_t,Y_t)_{t\ge 0}$ starting from~$(x_0,y_0)$ with transition rate matrix $$\tilde{\mu}_{(i_1,j_1),(i_2,j_2)}(t)=\mathbf{1}_{j_1=j_2}\left(\mathbf{1}_{i_2=i_1+1}- \mathbf{1}_{i_2=i_1}\right)\frac{f(j_1)\lambda(t,i_1)}{\varphi_p(t,i_1)}  +\mathbf{1}_{i_1=i_2}\mu^{i_1}_{j_1j_2}, \ i_1,i_2\in \cL_M, j_1,j_2\in \N, $$ where $p$ is  the solution of~$(\mathcal{E})$ satisfies~\eqref{discrete}. In fact, the Fokker-Planck equation of this process
$$
\left\{
\begin{array}{lcl}
    \partial_t q(t,i,j) & = & \sum_{k \neq j}
    \mu^i_{kj} q(t,i,k)+\ind{i \ge
      1}\frac{\lambda(t,i-1)}{\varphi_p(t,i-1)}f(j)q(t,i-1,j)\\
     & &-\left(\frac{\lambda(t,i)}{\varphi_p(t,i)}f(j)\ind{i\le
      M-1}-\mu^i_{jj}\right)q(t,i,j)
   \\
    q(0,i,j)&  = & 0 \;\forall (i,j)\neq (x_0,y_0), \\
    q(0,x_0,y_0)& = & 1.
\end{array}\right.
 $$
is linear and clearly solved by~$p$, which gives $q\equiv p$.

\subsection{Assumptions and notations}

 In this part, we assume that the transition rates satisfy the
following hypothesis.
\begin{hypo}\label{hypo:discrete} The intensity matrices $(\mu^k_{ij})_{i,j\ge 0}$ for $k\in \cL_M$ satisfy the following conditions:
\begin{itemize}
\item $\forall k \in \cL_M$, $\forall i,j \in \N\times \N$ such that $i \neq
  j$ $\mu^k_{ij}\ge 0$,
\item $\forall k \in \cL_M$, $\forall i \in \N$ $ \mu^k_{ii}\le 0$,
\item $\forall k \in \cL_M$, $\forall i \in \N$ $\sum_{j=0}^{\infty}
  \mu^k_{ij}=0$ (then $\sum_{j=0,j
    \neq i}^{\infty} \mu^k_{ij}=-\mu^k_{ii} $).\\
\end{itemize} Moreover, we assume $\forall k \in \cL_M$, $ \sup_{i \in \N}
|\mu^k_{ii}| <\infty$.
\end{hypo}
 
We also introduce specific notations used in the discrete case.
\begin{itemize}
  \item We define the set $E$ as the set of real sequences indexed by $i \in \cL_M$ and $j \in
  \N$: $$E:=\{u=(u^i_j)_{0\le i \le M, 0 \le j}: u^i_j \in
    \R \},$$
$E_+:=\{u=(u^i_j)_{0\le i \le M, 0 \le j}: u^i_j \in
    \R_+ \}$ and $E^*_+:=\{u=(u^i_j)_{0\le i \le M, 0 \le j}: u^i_j \in
    \R_+^* \}$.
\item For $u \in E$, we set $|u|=\sum_{i=0}^M \sum_{j=0}^{\infty}  |u^i_j|$.
\end{itemize}

\subsection{Solving the Fokker-Planck equation} To be more concise, we rewrite
the Fokker-Planck Equation ($\cE$) and Conditions $(\ref{eq1})-(\ref{eq2})$ by
using a sequence of functions. Let $P:=(P^i_j)_{0\le i \le M, j \ge 0}$ denote
a sequence such that each $P^i_j$ is a function from $\R_+$ to $\R$. Solving
$(\cE)$ under Constraints $(\ref{eq1})-(\ref{eq2})$ boils down to solving
 $$ (\cE')
\begin{cases}
    P'(t) & =\Psi(t,P(t)),\\
    P(0)&=P_0
\end{cases}
$$
under the constraints $\forall t \ge 0, \; P(t) \ge 0$ and $|P(t)|=1$. The
sequence $P_0$ is such that $(P_0)^i_j=0$ for $(i,j) \neq (x_0,y_0)$ and
$(P_0)^{x_0}_{y_0}=1$. $\Psi$ is an application from $\R_+ \times
E_+ \rightarrow E$ given by
\begin{align*}
  (\Psi(t,x))^i_j=&\sum_{k \ge 0} \mu^i_{kj} x^i_k+\ind{i \ge
    1}\frac{\lambda(t,i-1)}{\varphi(x,i-1)}f(j)x^{i-1}_j-
  \ind{i \le M-1}\left(\frac{\lambda(t,i)}{\varphi(x,i)}f(j)\right)x^i_j
\end{align*}
where $\varphi(x,i):=\frac{\sum_{l=0}^{\infty}f(l)x^{i}_l}{\sum_{l=0}^{\infty}x^{i}_l}$.

\begin{remark} $\Psi$ is defined without ambiguity on $E_+^*$. When $x\in E_+$, difficulties
  may arise when for some fixed $i$, $x^i_j=0$ $\forall j$. In this case, we still have 
  $\displaystyle   \lim_{z \in E_+^*,z \rightarrow x} \frac{z^i_j}{\varphi(z,i)}=0$ since $\underline{f} \le \varphi(z,i)\le \overline{f}$ for $z \in E_+^*$. Thus, we can extend
  $\Psi$ by continuity on $E_+$.
\end{remark}
 We aim at solving $(\cE)$ in the set of summable sequences (compatible with Condition
(\ref{eq2})) and get the following result.

\begin{thm}\label{thm_FP} Equation $(\mathcal{E}')$ admits a unique solution on $\R_+$
  satisfying $\forall t \ge 0, \; P(t) \ge 0$ and $|P(t)|=1$.
\end{thm}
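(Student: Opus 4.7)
The plan is to apply a Picard--Lindelöf argument to $(\cE')$ in the Banach space $\ell^1(\cL_M\times\N)$ equipped with the norm $|\cdot|$ defined above, first locally in time, and then extend the local solution globally by using positivity and mass conservation to keep the trajectory in a bounded invariant set.

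First, I would restrict $\Psi(t,\cdot)$ to the closed convex subset $\mathcal{K}:=\{x\in E_+:|x|\le 1\}$ (or the simplex $|x|=1$), extended by continuity at the points where some $S_i(x):=\sum_l x^i_l$ vanishes, as indicated in the Remark. On $\mathcal{K}$ I would prove that $\Psi(t,\cdot)$ is Lipschitz in $|\cdot|$, uniformly in $t\in[0,T]$. The linear part $\sum_k\mu^i_{kj}x^i_k$ is a bounded operator on $\ell^1$ because, under Hypothesis~\ref{hypo:discrete}, each row of $\mu^k$ has $\ell^1$-norm at most $2|\mu^k_{ii}|\le 2\sup_i|\mu^k_{ii}|<\infty$. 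For the nonlinear terms, I write
\[
\frac{x^i_j}{\varphi(x,i)}=\frac{x^i_j\,S_i(x)}{\Phi_i(x)},\qquad \Phi_i(x):=\sum_l f(l)x^i_l,
\]
and use $\underline{f}\,S_i(x)\le\Phi_i(x)\le\overline{f}\,S_i(x)$ together with $0<\underline{f}\le f\le\overline{f}$ to derive, via the elementary identity $\frac{a}{\alpha}-\frac{b}{\beta}=\frac{a-b}{\alpha}+\frac{b(\beta-\alpha)}{\alpha\beta}$, an $\ell^1$-Lipschitz estimate for the map $x\mapsto(x^i_j/\varphi(x,i))_{i,j}$. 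The factors $f(j)\lambda(t,i)$ are uniformly bounded by $\overline{f}\,\overline{\lambda}$, so multiplying does not spoil the estimate.

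Second, once $\Psi$ is uniformly Lipschitz on $\mathcal{K}$, Picard--Lindelöf produces a unique solution $P$ on a small interval $[0,\tau]$. I then check that $P$ stays in $\mathcal{K}$ by proving mass conservation and positivity. For mass conservation, summing $(\Psi(t,x))^i_j$ over all $(i,j)$ yields zero: $\sum_j\mu^i_{kj}=0$ by Hypothesis~\ref{hypo:discrete}, and the creation term at level $i$ cancels the destruction term at level $i-1$ after summing (with boundary terms vanishing because $\lambda(t,M)=0$ and the indicator $\ind{i\ge 1}$ kills the level $i=0$ gain). Hence $\frac{d}{dt}|P(t)|=0$ and $|P(t)|=1$. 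For positivity, the coefficient of $P^i_j(t)$ in $(\Psi(t,P(t)))^i_j$ is $\mu^i_{jj}-\ind{i\le M-1}\lambda(t,i)f(j)/\varphi(P(t),i)$, which is bounded below uniformly; writing $P^i_j(t)=\expp{-ct}Q^i_j(t)$ with $c$ large enough transforms the system into one with non-negative source terms whenever $Q\ge 0$. A standard comparison or ``corner-entry'' argument (showing $(\Psi(t,x))^i_j\ge 0$ whenever $x\ge 0$ and $x^i_j=0$) then yields $P(t)\ge 0$.

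Third, because the Lipschitz constant of $\Psi$ on $\mathcal{K}$ is independent of the starting point, the local existence time $\tau$ does not shrink along iterations, and concatenating local solutions starting from $P(\tau),P(2\tau),\ldots$ produces a unique solution on $\R_+$. Uniqueness on $\R_+$ follows from the same Lipschitz estimate and Grönwall's lemma, since any solution satisfying the two constraints automatically takes values in $\mathcal{K}$.

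The main obstacle will be the uniform Lipschitz estimate for the nonlinear map $x\mapsto x^i_j/\varphi(x,i)$ on the whole simplex $\mathcal{K}$, including at boundary points where $S_i(x)=0$: although the Remark ensures continuity, differentiability fails there, so one must either work with difference quotients and pass to the limit, or set up the Picard scheme on an enlarged set where $S_i(x)$ is allowed to vanish and handle those indices separately (in which case $P^i_j$ simply stays at $0$).
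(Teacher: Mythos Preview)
Your overall strategy mirrors the paper's, but there is a genuine gap at the Picard--Lindel\"of step. You propose to prove that $\Psi(t,\cdot)$ is Lipschitz on the closed set $\mathcal{K}\subset E_+$ and then invoke Picard--Lindel\"of there. But $\mathcal{K}$ has empty interior in $\ell^1(\cL_M\times\N)$, and nothing forces the Picard iterates to remain in $\mathcal{K}$: already the first iterate $P_0+\int_0^t\Psi(s,P_0)\,ds$ acquires a negative entry at $(x_0,y_0)$ from the diagonal term $\mu^{x_0}_{y_0 y_0}(P_0)^{x_0}_{y_0}$ and from the destruction term. So ``Lipschitz on $\mathcal{K}$'' is not enough to run the fixed-point argument, and the sentence ``Picard--Lindel\"of produces a unique solution $P$ on $[0,\tau]$; I then check that $P$ stays in $\mathcal{K}$'' is circular.

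The paper resolves this with one clean trick that you are groping towards in your last paragraph (``enlarged set'') but do not carry out: it replaces $(\cE')$ by the auxiliary equation $(\cE'')$ given by $P'(t)=\Psi(t,(P(t))_+)$, i.e.\ it composes the right-hand side with the coordinatewise positive part. Since $x\mapsto x_+$ is $1$-Lipschitz on $\ell^1$ and the very estimates you outline go through with $x_+,y_+$ in place of $x,y$, the map $\Psi_+(t,x):=\Psi(t,x_+)$ is globally Lipschitz on the whole of $\ell^1$. Standard Cauchy--Lipschitz in the Banach space then yields a unique global solution of $(\cE'')$ with no invariance issue to worry about. One shows a posteriori that this solution satisfies $P(t)\ge 0$ and $|P(t)|=1$ (your mass-conservation and corner-entry positivity arguments work verbatim here), so it also solves $(\cE')$; conversely any constrained solution of $(\cE')$ solves $(\cE'')$ and is therefore unique. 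Incidentally, the obstacle you single out --- Lipschitz continuity at boundary points where $S_i(x)=0$ --- is not the real difficulty: the telescoping bound $\sum_j f(j)(x^i_j)_+/\sum_l f(l)(x^i_l)_+\le 1$ makes those estimates uniform regardless of whether $S_i(x)$ vanishes.
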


 To do so, we first focus on the following differential equations: 
 $$ (\cE'')
\begin{cases}
    P'(t) & =\Psi(t,(P(t))_+),\\
    P(0)&=P_0.
\end{cases}
$$

\begin{prop}\label{prop_FP}
  Equation $(\cE'')$ admits a unique solution on $\R_+$. Moreover, the
  solution satisfies  $\forall t \ge 0, \; P(t) \ge 0$ and $|P(t)|=1$.
\end{prop}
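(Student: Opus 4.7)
The plan is to view $(\cE'')$ as an ODE in the Banach space $(E,|\cdot|)$ with norm $|u|=\sum_{i,j}|u^i_j|$, apply the Cauchy-Lipschitz theorem, and read positivity and mass conservation off the structure of~$\Psi$. Since $x\mapsto x_+$ is $1$-Lipschitz on $E$, it suffices to prove that $\Psi(t,\cdot)$ is globally Lipschitz on $E_+$, uniformly in $t\in[0,T]$. The linear $\mu$-part is easily bounded by $2\overline{\mu}$ with $\overline{\mu}:=\sup_{k\in\cL_M,\,i\in\N}|\mu^k_{ii}|$, using $\sum_j|\mu^i_{kj}|=2|\mu^i_{kk}|$. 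For each of the two nonlinear jump terms one needs, for $u,v\in\ell^1_+(\N)$,
\begin{equation*}
\sum_j f(j)\left|\tfrac{u_j}{\varphi(u)}-\tfrac{v_j}{\varphi(v)}\right| \le \left(1+\tfrac{2\overline{f}}{\underline{f}}\right)\sum_l|u_l-v_l|, \qquad \varphi(w):=\tfrac{\sum_l f(l)w_l}{\sum_l w_l}.
\end{equation*}
I would prove this by setting $A_w=\sum_l f(l)w_l$, $B_w=\sum_l w_l$ and using the telescoping
\begin{equation*}
\tfrac{f(j)u_j B_u}{A_u}-\tfrac{f(j)v_j B_v}{A_v}=\tfrac{f(j)u_j(B_u-B_v)}{A_u}+\tfrac{f(j)B_v u_j(A_v-A_u)}{A_u A_v}+\tfrac{f(j)B_v(u_j-v_j)}{A_v},
\end{equation*}
together with $\sum_j f(j)u_j=A_u$, $B_w/A_w\le 1/\underline{f}$, and $|A_u-A_v|\le\overline{f}\sum_l|u_l-v_l|$. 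Summing over $i$ and factoring out $\overline{\lambda}$ yields global Lipschitzness of $x\mapsto\Psi(t,x_+)$ on $E$, so Cauchy-Lipschitz gives a unique $P\in C^1(\R_+,E)$ defined on all of $\R_+$.

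For positivity I would introduce $v(t):=\sum_{i,j}(P^i_j(t))_-$, absolutely continuous on compacts. At a.e.~$t$, the only contributions to $v'(t)$ come from pairs $(i,j)$ with $P^i_j(t)<0$, and each such contribution equals $-(\Psi(t,P_+))^i_j$. In this expression, both the ``loss'' term $-\ind{i\le M-1}\lambda(t,i)f(j)(P^i_j)_+/\varphi(P_+,i)$ and the diagonal $\mu^i_{jj}(P^i_j)_+$ vanish because $(P^i_j)_+=0$, while the remaining ``gain'' terms $\sum_{k\neq j}\mu^i_{kj}(P^i_k)_+$ and $\ind{i\ge 1}\lambda(t,i-1)f(j)(P^{i-1}_j)_+/\varphi(P_+,i-1)$ are nonneg. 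Hence $v'\le 0$ a.e., and since $v(0)=0$ and $v\ge 0$, one concludes $v\equiv 0$, i.e.\ $P(t)\ge 0$ for every~$t$ and $P_+\equiv P$. With $P\ge 0$, summing $(\cE'')$ over $(i,j)\in\cL_M\times\N$ is legitimate: the $\mu$-terms vanish by $\sum_j\mu^i_{kj}=0$, and the two jump terms cancel after the index shift $i\mapsto i-1$, yielding $\frac{d}{dt}|P(t)|=0$ and $|P(t)|=|P_0|=1$.

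The main technical point is the \emph{global} $\ell^1$-Lipschitz estimate of the first paragraph: a priori the denominator $\sum_l w_l$ inside $\varphi(w)$ can approach $0$, but the algebraic identity $B_w/A_w\le 1/\underline{f}$, inherited directly from~\eqref{assump_f}, makes the quotient Lipschitz on all of $E_+$. It is this uniform bound (rather than only a bound on balls) that lets Cauchy-Lipschitz supply a \emph{global} solution without any separate non-explosion argument.
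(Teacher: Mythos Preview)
Your Lipschitz argument coincides with the paper's: the same three-term telescoping of the nonlinear quotient, the same constants ($2\overline{\mu}$ for the $\mu$-part and $\overline{\lambda}(1+2\overline{f}/\underline{f})$ per jump term), and the mass-conservation step is also identical. The one genuine difference is the positivity proof. You run a Lyapunov argument on $v(t)=\sum_{i,j}(P^i_j(t))_-$ and show $v'\le 0$ a.e., whereas the paper argues componentwise by contradiction: if $P^i_j(\bar t)<0$ for some $(i,j,\bar t)$, set $u=\sup\{s\le\bar t:P^i_j(s)=0\}$, observe that $(P^i_j)_+\equiv 0$ on $[u,\bar t]$ so the loss terms in $(\cE'')$ drop out there, and integrate from $u$ to $\bar t$ to obtain the contradiction $P^i_j(\bar t)=\int_u^{\bar t}[\text{gain terms}]\,ds\ge 0$. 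The paper's route is a bit more elementary because it never differentiates an infinite sum over~$j$; your route implicitly interchanges $\sum_{i,j}$ with $d/dt$, which is justified via Fubini and the bound $\sum_{i,j}|(P^i_j)'(t)|=|\Psi(t,P(t)_+)|\le K|P(t)|$, but you should state this explicitly. Conversely, your Lyapunov template is the more portable one. Both arguments are correct.
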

The proof of this proposition consists in first showing the Lipschitz property which gives the existence and uniqueness of~$P$ and then proving that $\forall t \ge 0, \; P(t) \ge 0$ and $|P(t)|=1$. This proof is postponed to Appendix~\ref{App_prop_FP}.

Then, the proof of Theorem~\ref{thm_FP} becomes obvious. The unique solution~$P$ of $(\mathcal{E}'')$ clearly solves $(\mathcal{E}')$, and any solution~$Q$ of $(\mathcal{E}')$ such that
  $\forall t \ge 0, \; Q(t) \ge 0$ and $|Q(t)|=1$ also solves
  $(\mathcal{E}'')$ and thus coincides with~$P$.


\section{The SLI model when~$Y$ is real valued.}

\subsection{Setting and main results}
We are interested in proving the existence of a process $(X_t,Y_t)_t$ solving
the stochastic differential equation~\eqref{continuous}. More precisely, we
will consider the following stochastic differential equation,
\begin{equation}\label{continuous2}
\begin{cases}
  X_t=X_0+\sum_{k, T^k\le t} \mathbf{1}_{\left\{U^k \le\frac{\underline{f}}{\overline{\lambda}\overline{f}} \frac{f(Y_{T^k-})\lambda(T^k-,X_{T^k-})}{\E[f(Y_{T^k-})|X_{T^k-}]}\right\}}\\
  Y_t=Y_0+\int_0^t b(s,X_s,Y_s)ds +\int_0^t \sigma (s,X_s,Y_s)dW_s + \int_0^t \gamma(s-,X_{s-},Y_{s-})dX_s,
\end{cases}
\end{equation}
with (possibly) random initial condition $(X_0,Y_0)$ such that
$\E[|Y_0|^m]<\infty$ for any $m\in \N$. We will denote in the sequel~$\cL_{init}$
the probability law of~$(X_0,Y_0)$ under~$\P$. To get existence and uniqueness
results for~\eqref{continuous2}, we will make
the following assumption on the coefficients.

\begin{hypo}\label{hypo:continuous}
\begin{enumerate}
\item The functions $b,\ \sigma,\ \gamma:\R_+\times \cL_M \times \R \rightarrow \R$ are
  measurable,  with sub-linear growth with respect to~$y$:
   $$\forall T>0,\exists C_T>0, \  \forall t \in [0,T], x \in \cL_M, y\in \R, |b(t,x,y)|+|\sigma(t,x,y)|+|\gamma(t,x,y)|\le C_T(1+|y|).$$

 \item The functions $b(t,x,y)$ and $\sigma(t,x,y)$ are such that for any~$x
   \in \cL_M$, $y_0\in \R$, there exists a unique strong solution for the SDE
  $$Y_t=y_0+\int_0^tb(s,x,Y_s)ds +\int_0^t \sigma(s,x,Y_s)dW_s, \ t\ge 0.$$
  This property holds if we assume for example that:
   $$\forall T>0,\exists C_T>0, \  \forall t \in [0,T], x \in \cL_M,
   \begin{cases}
     |b(t,x,y)-b(t,x,y')|\le C_T |y-y'| \\
     |\sigma(t,x,y)-\sigma(t,x,y')|\le C_T\sqrt{|y-y'|}.
   \end{cases}
   $$

 \item For any $x\in \cL_M$, $(t,y)\mapsto \gamma(t,x,y)$ is càdlàg with
   respect to~$t$ and continuous with respect to~$y$, i.e. 
  $\gamma(t,x,y)=\lim_{s>t,s\rightarrow t, z\rightarrow y}\gamma(s,x,z)$ and
  $\lim_{s<t,s\rightarrow t, z\rightarrow y}\gamma(s,x,z)$ exists ans is
  denoted by $\gamma(t-,x,y)$. 
\end{enumerate}
\end{hypo}

To prove the strong existence and uniqueness of a process $(X,Y)$ solving~\eqref{continuous2}, we will first need to prove a weak existence and uniqueness result. To do so, we introduce the Martingale Problem associated with~\eqref{continuous2}. We denote by $\cD([0,T],\R)$ the set of càdlàg real valued functions and consider:
 $$E=\{(x(t),y(t))_{t\in[0,T]}, s.t.\; y\in
 \cD([0,T],\R) \text{ and } x \text{ is a nondecr. càdlàg function with values in }
\cL_M \}.$$
This path space is endowed with the usual Skorokhod topology for càdlàg processes, and with the
associated Borelian $\sigma$-algebra. We denote by $\cP(E)$ the set of probability measures on~$E$. We are
looking for a probability measure $\probinv \in \cP(E)$ such that $\cL_{init}$ is the probability
law of $(X_0,Y_0)$ under~$\probinv$ and, for any $\phi \in
\cC^{0,2}(\R^2,\R)$,
\begin{eqnarray}\label{pb_mg}
  \phi(X_t,Y_t)-\phi(X_0,Y_0)-&& \hspace{-0.5cm} \displaystyle \int_0^t \Big\{\frac{\lambda(u,X_u)f(Y_u)}{\E_\probinv[f(Y_u)|X_{u}]}[\phi(X_u+1,Y_u+\gamma(u,X_u,Y_u))-\phi(X_u,Y_u)]
 \nonumber \\ &&\hspace{-1.3cm}
+  b(u,X_u,Y_u)\partial_y\phi(X_u,Y_u)+\frac{1}{2}\sigma^2(u,X_u,Y_u)\partial^2_y\phi(X_u,Y_u)
  \Big\}du 
\end{eqnarray}
is a martingale with respect to the filtration $\cF_t=\sigma((X_u,Y_u),u \le
t)$ satisfying the usual conditions. Here, $X_t$ and $Y_t$ stand for the coordinate applications:
$$X_t:\begin{array}[t]{l} E \rightarrow \cL_M \\ (x,y) \mapsto x(t)
\end{array}
 \text{ and } Y_t:\begin{array}[t]{l} E \rightarrow \R \\ (x,y) \mapsto y(t),
\end{array}$$
and $\E_\probinv$ denotes the expectation under the
probability measure~$\probinv \in \mathcal{P}(E)$. Similarly, for any $\Q\in \cP(E)$, we denote by  $\E_\Q$ the expectation under the probability measure~$\Q$ while $\E$ simply denotes the expectation under the original probability measure~$\P$.\\

The following Theorem is the main result of the paper.
\begin{thm}\label{thm_continuous}
 We assume that Hypothesis~\ref{hypo:continuous} holds and consider a
 $\cF_0$-measurable initial condition $(X_0,Y_0)$ such that $\forall m \in \N,
 \E[|Y_0|^m]<\infty$. Then, there exists a unique probability
 measure~$\probinv\in \cP(E)$ solving the Martingale Problem~\eqref{pb_mg}.
 Besides, there exists a unique strong solution $(X_t,Y_t)_{t\ge 0}$
 to Equation~\eqref{continuous2}.
\end{thm}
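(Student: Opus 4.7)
The strategy is to follow the interacting particle system programme announced in the introduction. On an enlarged probability space carrying $N$ independent copies $(W^i,(U^{k,i})_k,(T^{k,i})_k)$ of the noise, I will define $N$ interacting particles $(X^{i,N},Y^{i,N})_{1\le i\le N}$ satisfying a version of~\eqref{continuous2} where the conditional expectation $\E[f(Y_{T^k-})|X_{T^k-}]$ is replaced by the empirical mean
$$\widehat{\varphi}^N(t,x)=\frac{\sum_{j=1}^N f(Y^{j,N}_{t-})\mathbf{1}_{\{X^{j,N}_{t-}=x\}}}{\sum_{j=1}^N \mathbf{1}_{\{X^{j,N}_{t-}=x\}}},$$
extended to a bounded value when the denominator vanishes. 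The two-sided bound $\underline{f}\le\widehat{\varphi}^N\le\overline{f}$ keeps the thinning indicator in $[0,1]$, and the exponential clocks of rate $\overline{\lambda}\overline{f}/\underline{f}$ dominate every true jump intensity, so the particle system is well defined by induction on its finitely many jumps on $[0,T]$, while Hypothesis~\ref{hypo:continuous} yields strong existence and uniqueness of the continuous part of each $Y^{i,N}$ between jumps.

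Classical interacting-particle arguments then provide uniform moment estimates $\sup_N\E[\sup_{t\le T}|Y^{1,N}_t|^m]<\infty$ for every $m\in\N$, using the sub-linear growth of $b,\sigma,\gamma$ and the fact that the jump count per particle is dominated by a Poisson variable of parameter $T\overline{\lambda}\overline{f}/\underline{f}$. Exchangeability reduces tightness of the empirical measures $\mu^N=\frac{1}{N}\sum_{i=1}^N\delta_{(X^{i,N},Y^{i,N})}$ in $\cP(\cP(E))$ to tightness of the law of $(X^{1,N},Y^{1,N})$ in $\cP(E)$, which follows from Aldous' criterion applied to the Skorokhod topology. Writing the martingale identity for the empirical measure of the prelimit system and passing to a subsequential limit produces some $\probinv\in\cP(E)$ solving~\eqref{pb_mg}; the non-linear term is handled through the continuity of the map $\mu\mapsto\int f\,d\mu(\cdot,x)/\mu(\{X=x\})$ on $\{\mu:\mu(X=x)>0\}$ together with the uniform lower bound $\underline{f}$ on the denominator.

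For uniqueness in law, the crucial observation is that the non-linearity enters only through the finite family of deterministic functions $\eta_x(u)=\E_\probinv[f(Y_u)\mathbf{1}_{\{X_u=x\}}]/\P_\probinv(X_u=x)$, $x\in\cL_M$. For any candidate~$\eta$, plugging it into~\eqref{pb_mg} produces a \emph{linear} martingale problem for a time-inhomogeneous jump--diffusion with bounded jump rates and SDE coefficients satisfying Hypothesis~\ref{hypo:continuous}, hence with a unique solution by classical theory. Writing the evolution equations for $u\mapsto\E_\probinv[f(Y_u)\mathbf{1}_{\{X_u=x\}}]$ and $u\mapsto\P_\probinv(X_u=x)$ encoded in~\eqref{pb_mg}, differencing two candidate~$\eta$'s, and using the uniform lower bound on $\P_\probinv(X_u=x)$ provided by the bounded intensity, yields a Gronwall-type estimate forcing both candidates to coincide.

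Strong existence and pathwise uniqueness then follow quickly: once the law is known to be unique, $(t,x)\mapsto\E_\probinv[f(Y_t)|X_t=x]$ is a fixed bounded measurable function, and~\eqref{continuous2} becomes a classical jump SDE on the original probability space with the prescribed $W$, $(U^k)$, $(T^k)$, to which the Yamada--Watanabe conditions of Hypothesis~\ref{hypo:continuous} apply piecewise between the successive thinned jump times of $X$. The main obstacle of the programme is the uniqueness step: the non-linear dependence on the law prevents a direct application of standard martingale uniqueness results, and the reduction to a finite-dimensional fixed point for $\eta$ crucially relies on both the discreteness of $X$ and the uniform lower bound $\underline{f}$ on $f$.
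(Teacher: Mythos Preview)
Your existence argument via the interacting particle system, tightness by Aldous' criterion and passage to the limit in the martingale problem is essentially the paper's Section~3.3, so that part is fine. The final step (once the law is unique, \eqref{continuous2} becomes a standard jump SDE solved piecewise between thinning times) also matches the paper.

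The gap is in your uniqueness argument. Two points make the sketched Gronwall on the finite family $\eta_x(u)=\E_\probinv[f(Y_u)\,|\,X_u=x]$ fail as written.

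First, the ``uniform lower bound on $\P_\probinv(X_u=x)$ provided by the bounded intensity'' does not hold under the hypotheses of the theorem. Bounded intensity only gives $\P_\probinv(X_u=x)\ge e^{-\overline{\lambda}\overline{f}u/\underline{f}}\,\P(X_0=x)$, which is useless when $\P(X_0=x)=0$; e.g.\ if $X_0=0$ a.s., then $\P_\probinv(X_u=k)$ vanishes as $u\downarrow 0$ for every $k\ge 1$, and $1/\P_\probinv(X_u=x)$ is not uniformly bounded on $[0,T]\times\cL_M$. The paper explicitly flags this issue (see the remark after the convergence-rate corollary), and its uniqueness proof does \emph{not} rely on any such lower bound.

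Second, the ``finite-dimensional fixed point'' picture does not close. Applying the martingale identity~\eqref{pb_mg} with $\phi(x,y)=f(y)\mathbf{1}_{\{x=k\}}$ gives
\[
\frac{d}{du}\,\E_\probinv\!\big[f(Y_u)\mathbf{1}_{\{X_u=k\}}\big]
=\E_\probinv\!\Big[\mathbf{1}_{\{X_u=k\}}\big(b\,\partial_y f+\tfrac12\sigma^2\partial_y^2 f\big)(u,k,Y_u)\Big]+\text{(jump terms)},
\]
and the diffusion contribution on the right is a new functional of the law of $(X_u,Y_u)$, not a function of the family $(\eta_x)_x$ alone. Differencing two candidate solutions therefore produces terms that are not controlled by $|\eta^1-\eta^2|$, and the Gronwall loop on $\eta$ cannot be closed with the available data.

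The paper's route avoids both obstacles by running the contraction at the level of the \emph{full law} in total variation. One introduces $\Psi:\cP(E)\to\cP(E)$ sending $\Q$ to the law of the solution of the linear SDE with frozen $\varphi^\Q$, couples $(X^{\probinv},Y^{\probinv})$ and $(X^{\Q},Y^{\Q})$ on the same noise, and bounds $V_T(\Psi(\probinv)-\Psi(\Q))$ by $2\,\P(\tau\le T)$, where $\tau$ is the first time the two $X$-components decouple. The crucial computation is
\[
\E\big[\,|\varphi^{\probinv}(t,X^{\probinv}_t)-\varphi^{\Q}(t,X^{\probinv}_t)|\,\big]
\le \sum_{x\in\cL_M}\Big(|\E_{\probinv}[f(Y_t)\mathbf{1}_{X_t=x}]-\E_{\Q}[f(Y_t)\mathbf{1}_{X_t=x}]|+\overline{f}\,|\probinv(X_t=x)-\Q(X_t=x)|\Big),
\]
where averaging against the law of $X^{\probinv}_t$ (which equals $\probinv$ since $\probinv$ is a fixed point) cancels the dangerous factor $1/\probinv(X_t=x)$. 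This yields $V_T(\Psi(\probinv)-\Psi(\Q))\le C\int_0^T V_s(\probinv-\Q)\,ds$, and iteration gives the $C^kT^k/k!$ bound that forces any second fixed point to coincide with $\probinv$. Note that this argument uses the full total variation metric on path space, not a finite-dimensional proxy; that is precisely what makes it robust to both of the issues above.
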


To prove Theorem \ref{thm_continuous}, we need the following basic result on standard SDEs with jumps.  This is an easy consequence of Hypothesis~\ref{hypo:continuous}. For the sake of completeness, we give its proof in Appendix~\ref{app_proof_unicite_eds}.
\begin{prop}\label{unicite_eds_sauts} For~$\Q \in \mathcal{P}(E)$, we set for $t\in[0,T], x\in \cL_M$
 $$\varphi^{\Q}(t,x)= \frac{
  \E_{\Q}[f(Y_{t})\ind{X_{t}=x}] }{\Q(X_t=x)}, \text{ when }\Q(X_t=x)>0\text{ and }\varphi^{\Q}(t,x)=\underline{f}\text{ otherwise.}$$
  Let Hypothesis~\ref{hypo:continuous} hold. Then, for any $(x_0,y_0)\in \cL_M \times \R$, there exists a unique strong solution $(X_t,Y_t)_{t\in[0,T]}$ to the following SDE with jumps:
\begin{equation}\begin{cases}\label{EDS_avec_sauts}
  X_t=x_0+\sum_{k, T^k\le t} \mathbf{1}_{\left\{U^k \le\frac{\underline{f}}{\overline{\lambda}\overline{f}} \frac{f(Y_{T^k-})\lambda(T^k-,X_{T^k-})}{\varphi^{\Q}(T^k-,X_{T^k-})} \right\}}\\
  Y_t=y_0+\int_0^t b(s,X_s,Y_s)ds +\int_0^t \sigma (s,X_s,Y_s)dW_s + \int_0^t \gamma(s-,X_{s-},Y_{s-})dX_s.
\end{cases}
\end{equation}
\end{prop}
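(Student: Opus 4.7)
The proof would exploit the crucial fact that $\Q\in\cP(E)$ is \emph{fixed} in the statement, which means that $\varphi^{\Q}(t,x)$ is a deterministic, bounded, Borel-measurable function of $(t,x)$ satisfying $\underline{f}\le \varphi^{\Q}(t,x)\le \overline{f}$. In particular, equation \eqref{EDS_avec_sauts} is no longer a non-linear McKean--Vlasov SDE but a standard pathwise SDE with jumps whose intensity $\lambda(t-,X_{t-})/\varphi^{\Q}(t-,X_{t-})$ is bounded by $\overline{\lambda}/\underline{f}$. The plan is therefore to construct the unique strong solution by induction along the points $(T^k)_{k\ge 1}$ of the driving Poisson-type input, which go to $+\infty$ almost surely since the $E^n$ are i.i.d.\ exponentials with positive parameter $\overline{\lambda}\overline{f}/\underline{f}$.

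First I would check that the acceptance threshold
\[
\frac{\underline{f}}{\overline{\lambda}\,\overline{f}}\cdot
\frac{f(Y_{T^k-})\,\lambda(T^k-,X_{T^k-})}{\varphi^{\Q}(T^k-,X_{T^k-})}
\]
lies in $[0,1]$, which is immediate from $f\le \overline{f}$, $\lambda\le\overline{\lambda}$ and $\varphi^{\Q}\ge\underline{f}$; so the indicator in \eqref{EDS_avec_sauts} is meaningful. Then on the random interval $[0,T^1)$ the process $X$ is constant equal to $x_0$, and $Y$ is forced to solve the standard SDE
\[
dY_s = b(s,x_0,Y_s)\,ds + \sigma(s,x_0,Y_s)\,dW_s,\qquad Y_0=y_0,
\]
which, by the second part of Hypothesis \ref{hypo:continuous}, admits a unique strong solution. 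At $t=T^1$ the left-limit $Y_{T^1-}$ is well defined, and comparing $U^1$ to the threshold above deterministically decides whether $X$ jumps from $x_0$ to $x_0+1$; if it does, $Y$ also receives the additive increment $\gamma(T^1-,x_0,Y_{T^1-})$, which is finite by the sub-linear growth in Hypothesis \ref{hypo:continuous}(1). This uniquely defines $(X_{T^1},Y_{T^1})$.

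The induction step is identical: assume that $(X_s,Y_s)_{s\le T^k}$ has been uniquely constructed and is $\cF_{T^k}$-measurable. On $[T^k,T^{k+1})$, $X$ stays at the value $X_{T^k}\in\cL_M$ and $Y$ solves, strongly and uniquely, the continuous SDE with drift $b(\cdot,X_{T^k},\cdot)$, diffusion $\sigma(\cdot,X_{T^k},\cdot)$ and initial datum $Y_{T^k}$, restarted from time $T^k$ and driven by the increments of $W$; sub-linear growth rules out explosion on any $[T^k,T^{k+1})\cap[0,T]$. At $T^{k+1}$ one defines the jump exactly as above using $\lambda(T^{k+1}-,X_{T^k})$, $\varphi^{\Q}(T^{k+1}-,X_{T^k})$ and the c\`adl\`ag property of $\gamma$ in $t$ together with its continuity in $y$. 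Since $T^k\to\infty$ a.s., this yields, pathwise and uniquely, a global c\`adl\`ag solution on $[0,T]$, which is both strong existence and pathwise uniqueness.

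The only minor subtlety is measurability at the jump points: the choice of $U^k$ is a.s.\ not on the boundary of the acceptance set (the threshold is continuous in $Y_{T^k-}$ thanks to the continuity of $f$ and $\gamma$ in $y$, and $U^k$ has a density), so the indicator is a.s.\ continuous in the past history and the construction is genuinely measurable. I expect no hard analytic step here; the content of the proposition is really a bookkeeping statement that for a frozen $\Q$ the problem decouples into finitely many standard SDEs between exponential epochs.
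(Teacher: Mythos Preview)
Your proposal is correct and follows essentially the same approach as the paper: an explicit pathwise construction of $(X,Y)$ by induction over the epochs $T^k$, using Hypothesis~\ref{hypo:continuous}(2) to solve the continuous SDE for $Y$ on each inter-jump interval with $X$ frozen, and then deciding the jump at $T^k$ via the threshold comparison. The paper's version is terser; it additionally remarks that $t\mapsto\varphi^{\Q}(t,x)$ is c\`adl\`ag (so the left limits are well defined) and that $X$ can jump at most $M$ times because $\lambda(t,M)=0$, but these are the only points you might add for completeness.
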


\begin{proof}[Proof of Theorem \ref{thm_continuous}]
  The proof of Theorem~\ref{thm_continuous} is split in three main steps.
  \begin{itemize}
    \item First, we show the existence of a probability measure $\probinv \in
      \cP(E)$ solving the Martingale Problem~\eqref{pb_mg}. This result is
      obtained by considering the associated interacting particle system: we
      show that each particle converges in law, and that any probability
      measure in the support of the limiting law solves the Martingale Problem.
      This is done in Section~\ref{section_existence}.
    \item Second, we show the uniqueness of the probability measure $\probinv
      \in \cP(E)$ solving~\eqref{pb_mg}.  To do so, we introduce a function
      $\Psi:\cP(E) \rightarrow \cP(E)$ defined as follows. Let $(X_0,Y_0)$ be a
      random variable distributed according to~$\cL_{init}$ under~$\P$. Then,
      we know from Proposition~\ref{unicite_eds_sauts} that there exists a
      unique process $(X^{\Q}_t,Y^{\Q}_t)_{t\in[0,T]}$ solving
      \begin{equation}\label{defXqYq}\begin{cases}
        X_t^{\Q}=X_0+\sum_{k, T^k\le t} \mathbf{1}_{\left\{U^k \le\frac{\underline{f}}{\overline{\lambda}\overline{f}} \frac{f(Y_{T^k-}^{\Q})\lambda(T^k-,X_{T^k-}^{\Q})}{\varphi^{\Q}(T^k-,X^{\Q}_{T^k-})} \right\}}\\
        Y_t^{\Q}=Y_0+\int_0^t b(s,X_s^{\Q},Y_s^{\Q})ds +\int_0^t \sigma (s,X_s^{\Q},Y_s^{\Q})dW_s + \int_0^t \gamma(s-,X_{s-}^{\Q},Y_{s-}^{\Q})dX^{\Q}_s,
      \end{cases}
    \end{equation}
    and we define 
    $$\Psi(\Q)=law((X^{\Q}_t,Y^{\Q}_t)_{t\in[0,T]}) \in \mathcal{P}(E).$$
    As we will see in Section~\ref{section_unicite}, $\Psi$ (or more precisely
    $\Psi$ iterated $k$-times) is a contraction mapping for the variation
    norm. Combining this result with the following Lemma gives the uniqueness of the
    probability measure solving~\eqref{pb_mg}.

    \begin{lemma}\label{lemme_probinv}
      Let $\probinv \in \cP(E)$. We have $\Psi(\probinv)=\probinv$ if and only if~$\probinv$ solves the Martingale Problem~\eqref{pb_mg}. In this case, $(X_t^\probinv,Y_t^\probinv)_{t\in[0,T]}$ solves Equation~\eqref{continuous2}.
    \end{lemma}

  \item Then, the existence and uniqueness of~$\probinv$ satisfying~$\Psi(\probinv)=\probinv$ and Lemma~\ref{lemme_probinv} automatically give the strong existence and uniqueness of $(X,Y)$ satisfying~\eqref{continuous2} since we necessarily have  $\E[f(Y_{T^k-})|X_{T^k-}]=\E_\probinv[f(Y_{T^k-})|X_{T^k-}]$ and thus $(X_t,Y_t)_{t\in[0,T]}=(X_t^\probinv,Y_t^\probinv)_{t\in[0,T]}$.
\end{itemize}
\end{proof}

\begin{proof}[Proof of Lemma \ref{lemme_probinv}] The direct implication is
  clear since $\Psi(\probinv)=\probinv$ gives that
  $\E[f(Y_t^{\probinv})|X_t^{\probinv}]=\varphi^{\probinv}(t,X_{t}^{\probinv})$. Thus,
  $(X_t^\probinv,Y_t^\probinv)_{t\in[0,T]}$ solves~\eqref{continuous2} and
  in particular $\probinv$ solves the  Martingale Problem~\eqref{pb_mg}.
  
  Conversely, let us assume that $\probinv$ solves the Martingale Problem~\eqref{pb_mg}. We know from Proposition~\ref{unicite_eds_sauts} that strong uniqueness holds for the SDE $(X^{\probinv}_t,Y^{\probinv}_t)_{t\in[0,T]}$. From \cite[Theorem II$_{13}$]{Lepeltier}, we know that strong uniqueness implies weak uniqueness, which precisely gives $\Psi(\probinv)=\probinv$ since $\probinv$ solves the Martingale Problem~\eqref{pb_mg} and therefore the Martingale problem associated with~\eqref{defXqYq}. In particular, we have  $\E[f(Y_t^{\probinv})|X_t^{\probinv}]=\varphi^{\probinv}(t,X_{t}^{\probinv})$, and $(X_t^\probinv,Y_t^\probinv)_{t\in[0,T]}$ solves~\eqref{continuous2}.
\end{proof}




\subsection{The interacting particle system}\label{sec_ips}

We assume that the probability space $(\Omega, \mathcal{F}, \P)$  carries all
the random variables used below.  Now, we set up the particle system related to
the Martingale Problem~\eqref{pb_mg}. In the following, $N$ will denote the
number of particles, $(X^i_0,Y^i_0),i\in \N$ are independent random variables
following the law~$\cL_{init}$ under~$\P$, and $(W^i_t,t\ge 0),i\in \N$ are
independent standard Brownian motions. We build an interacting particle system
$((X^{i,N}_t,Y^{i,N}_t),t\ge 0)_{1 \le i\le N}$ with the following features.
For $i=1,\dots,N$,  $(X^{i,N}_t,t\ge 0)$ is a Poisson process with intensity:
\begin{align}\label{eq3}
  \frac{\lambda(t-,X^{i,N}_{t-})f(Y^{i,N}_{t-}) \sum_{j=1}^N \ind{X^{j,N}_{t-}=X^{i,N}_{t-}} }{\sum_{j=1}^N
    f(Y^{j,N}_{t-})\ind{X^{j,N}_{t-}=X^{i,N}_{t-}}},
\end{align}
and $Y^{i,N}_t$ solves the following equation:

\begin{align}\label{eq4}
  Y^{i,N}_t=Y^i_0+ \int_0^t b(s,X^{i,N}_s,Y^{i,N}_s) ds +\int_0^t
  \sigma(s,X^{i,N}_s,Y^{i,N}_s) dW^i_s+\int_0^t
  \gamma(s-,X^{i,N}_{s-},Y^{i,N}_{s-})dX^{i,N}_{s}.
\end{align}

In fact, we can give an explicit construction of this particle system, which will be useful later and we explain now. Let us consider $(U^{i,k})_{i,k \in \N^*}$ a sequence
of independent uniform variables on $[0,1]$ and $(E^{i,k})_{i,k \in \N^*}$ a
sequence of independent exponential random variables with parameter
$\frac{\overline{\lambda}\overline{f}}{\underline{f}}$. These variables are
independent, and independent of the previously defined Brownian motions. We
define the times
$$T^{i,k}=\sum_{l=1}^{k} E^{i,l},$$
and we can order $(T^{i,k},i=1,\dots,N,k\ge 1)$, such that
$0<T^{i_1,k_1}<\dots<T^{i_l,k_l}<\dots$ almost surely. 

Up to the first jump
of~$X^{i,N}$, $Y^{i,N}_t$ is defined as the unique strong solution of
$$ Y^{i,N}_t=Y^i_0+\int_0^t b(s,X^i_0,Y^{i,N}_s)ds+ \int_0^t
\sigma(s,X^i_0,Y^{i,N}_s)dW^i_s. $$
At time $\tau=T^{i_l,k_l}$, the process $X^{i_l,N}$ makes a jump of size~$1$ if
$$U^{i_l,k_l}\le\frac{\underline{f}}{\overline{\lambda}\overline{f}} \frac{\lambda(\tau-,X^{i_l,N}_{\tau-})f(Y^{i_l,N}_{\tau-}) \sum_{j=1}^N \ind{X^{j,N}_{\tau-}=X^{i_l,N}_{\tau-}} }{\sum_{j=1}^N
  f(Y^{j,N}_{\tau-})\ind{X^{j,N}_{\tau-}=X^{i_l,N}_{\tau-}}},$$ and does not jump
otherwise. If a jump occurs, we set
$Y^{i_l,N}_\tau=Y^{i_l,N}_{\tau-}+\gamma(\tau-,X^{i_l,N}_{\tau-},Y^{i_l,N}_{\tau-})$ and, up
to the next jump of $X^{i_l,N}$, we define $Y^{i_l,N}_t$ as the unique strong
solution of
$$ Y^{i_l,N}_t=Y^{i_l,N}_{\tau}+\int_{\tau}^t b(s,X^{i_l,N}_{\tau},Y^{i_l,N}_s)ds+ \int_{\tau}^t
\sigma(s,X^{i_l,N}_{\tau},Y^{i_l,N}_s)dW^i_s, \ t \ge \tau.$$

\subsection{Existence of a solution to~\eqref{pb_mg}}\label{section_existence}
We follow the analysis carried out by~\cite{Meleard}, pages 69 and 70. We
denote by $\mu^{N}=\frac{1}{N} \sum_{i=1}^N
\delta_{(X^{i,N}_t,Y^{i,N}_t)_{t\in[0,T]}} $ the empirical measure given by
the particle system. It is a
random variable taking values in $\cP(E)$. We denote by~$\pi^N \in
\cP(\cP(E))$ the probability law of
$\mu^N$.
 For $\pi \in \cP(\cP(E))$, we denote by
 $$I(\pi)=\int_{\cP(E)}\mu \pi(d\mu) \in \cP(E),$$
the mean of $\pi$.
Let $F:E \rightarrow \R$ be a bounded function which is continuous with respect
to the Skorokhod topology. It induces an application $\cP(E) \rightarrow \R$ --- still denoted by~$F$ with an abuse of notation --- such that
$$F(\mu)=\int_E F(z) \mu(dz).$$ 
Since  $\pi^N$ is by definition the probability law of $\mu^N$,  we have by
using Fubini's Theorem
$\E(F(\mu^N))=\int_{\cP(E)}F(\mu) \pi^N(d\mu)=\int_E F(z) I(\pi^N)(dz).$ On
the other hand, we have $F(\mu^N)=\frac{1}{N}\sum_{i=1}^N
F((X^{i,N}_t,Y^{i,N}_t)_{t\in[0,T]})$. By symmetry,
$(X^{i,N}_t,Y^{i,N}_t)_{t\in[0,T]}$ has the same law as
$(X^{1,N}_t,Y^{1,N}_t)_{t\in[0,T]}$ and we get that:
\begin{equation}\label{eq_pi_N}
  \E[F((X^{1,N}_t,Y^{1,N}_t)_{t\in[0,T]})]=\int_E F(z) I(\pi^N)(dz).
\end{equation}

\begin{lemma}\label{pi_tendue}
The sequence $(\pi^N)_N$ is tight.
\end{lemma}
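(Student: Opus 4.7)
The strategy is the standard one from the interacting particle systems literature (cf.\ \cite{Sznitman,Meleard}): reduce the tightness of $(\pi^N)$ on $\cP(\cP(E))$ to the tightness on $E$ of the law of a single particle, and then verify the latter with moment estimates and Aldous's criterion.

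First, I would invoke the classical equivalence stating that $(\pi^N)_N$ is tight in $\cP(\cP(E))$ if and only if its intensity measures $(I(\pi^N))_N$ are tight in $\cP(E)$. The proof is standard: for a compact $K\subset E$ and $\eps>0$, the set $\{\mu\in\cP(E):\mu(K^c)\le\eps\}$ is relatively compact by Prokhorov, and Markov's inequality together with the identity $\E_{\pi^N}[\mu(K^c)]=I(\pi^N)(K^c)$ gives the desired compact-containment for $\pi^N$. By identity~\eqref{eq_pi_N} and the exchangeability of the particles, $I(\pi^N)$ is exactly the law of $(X^{1,N}_t,Y^{1,N}_t)_{t\in[0,T]}$ under~$\P$, so it remains to show that this sequence of laws is tight on $E$.

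Second, I would handle the two coordinates separately. For the $X$-component, by the explicit construction in Section~\ref{sec_ips}, the jumps of $X^{1,N}$ form a subset of the Poisson point process $\{T^{1,k},k\ge 1\}$ with intensity $\overline{\lambda}\overline{f}/\underline{f}$, which is independent of~$N$. Hence the number of jumps of $X^{1,N}$ on $[0,T]$ is stochastically dominated by a $\mathrm{Poisson}(T\overline{\lambda}\overline{f}/\underline{f})$ variable, and since $\cL_M$ is finite this immediately yields tightness in $\cD([0,T],\cL_M)$ (e.g.\ via Aldous's criterion, which reduces to controlling the jump count).

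Third, for the $Y$-component I would verify Aldous's criterion on $\cD([0,T],\R)$. The key ingredient is the uniform moment bound
$$\sup_{N}\E\!\left[\sup_{t\in[0,T]}|Y^{1,N}_t|^m\right]<\infty,\qquad m\ge 2,$$
obtained from~\eqref{eq4} by applying the Burkholder--Davis--Gundy inequality on the Brownian integral, the sub-linear growth of $b,\sigma,\gamma$ from Hypothesis~\ref{hypo:continuous}(1), the stochastic domination $dX^{1,N}_s \le dN^{1}_s$ with $N^{1}_t=\sum_k\ind{T^{1,k}\le t}$ to control the jump integral $\int_0^t\gamma(s-,X^{1,N}_{s-},Y^{1,N}_{s-})dX^{1,N}_s$, and finally Grönwall's lemma together with $\E[|Y^1_0|^m]<\infty$. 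A completely analogous estimate on increments gives, for stopping times $\tau_N\le T$ and $\delta>0$,
$$\E\!\left[|Y^{1,N}_{\tau_N+\delta}-Y^{1,N}_{\tau_N}|^2\right]\le C\,\delta,$$
which implies the Aldous condition uniformly in~$N$ and yields tightness of $(Y^{1,N})_N$ on $\cD([0,T],\R)$.

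The only delicate point is the jump term in~\eqref{eq4}, because the intensity~\eqref{eq3} depends on the whole empirical measure and could in principle blow up. The saving observation is that the acceptance ratio in the construction is bounded by one, so $X^{1,N}$ jumps only at times $T^{1,k}$, and the bound $dX^{1,N}_s\le dN^{1}_s$ makes every estimate above uniform in~$N$. This is the only way the interaction enters the argument, and it enters harmlessly.
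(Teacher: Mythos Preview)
Your proposal is correct and follows essentially the same route as the paper: reduce tightness of $(\pi^N)_N$ to tightness of the law of a single particle via $I(\pi^N)=\text{law}((X^{1,N},Y^{1,N}))$ (the paper cites Proposition~4.6 of \cite{Meleard} for this), establish the uniform moment bound $\sup_N\E[\sup_{t\le T}|Y^{1,N}_t|^p]<\infty$ by Gr\"onwall, and verify Aldous's criterion using that the jumps of $X^{1,N}$ are dominated by the fixed Poisson process $N^1$.

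The only organizational difference is that the paper checks Aldous for the pair $(X^{1,N},Y^{1,N})$ jointly by splitting on the event $\{\nu^{1,N}_{\tau_2}>\nu^{1,N}_{\tau_1}\}$; on its complement neither $X^{1,N}$ nor the $\gamma\,dX$ part of $Y^{1,N}$ moves, so only the drift and Brownian integrals contribute and the increment estimate is immediate---this sidesteps your direct estimate of $\E\big[|\int_\tau^{\tau+\delta}\gamma\,dX^{1,N}|^2\big]$. For the moment bound the paper exploits that $X^{1,N}$ jumps at most $M$ times (since $\lambda(t,M)=0$), which handles $|\int_0^t\gamma\,dX^{1,N}|^p$ slightly more cleanly than the domination $dX^{1,N}\le dN^1$, but both arguments go through.
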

The proof of Lemma~\ref{pi_tendue} is postponed to Appendix~\ref{App_tension}. Now, 
wWe can consider a subsequence $\pi^{N_k}$ which converges weakly to
$\pi^\infty \in \cP(\cP(E))$. 
Let $q\in \N^*$, $0\le s_1\le\dots\le s_q\le s \le
t$, $\phi\in \cC^{0,2}_b(\R^2,\R)$ and $g_1,\dots,g_q \in\cC_b(\R^2,\R)$ be
bounded functions with bounded derivatives for~$\phi$. We set for $\Q \in
\cP(E)$,
\begin{align}\label{def_F}
  F(\Q)= &\E_\Q \Big[ \Big(\phi(X_t,Y_t)-\phi(X_s,Y_s) - \int_s^t
  \Big\{\frac{\lambda(u,X_u)f(Y_u)}{\E_\Q[f(Y_u)|X_{u}]}
  (\phi(X_u+1,Y_u+\gamma(u,X_u,Y_u)) -\phi(X_u,Y_u))\nonumber \\
 & + b(u,X_u,Y_u)\partial_y\phi(X_u,Y_u)+ 
  \frac{1}{2}\sigma^2(u,X_u,Y_u)\partial^2_y\phi(X_u,Y_u)\Big\}du
  \Big)\prod_{l=1}^q g_l(X_{s_q},Y_{s_q}) \Big]
\end{align}
We have to check that $\Q \mapsto F(\Q)$ is continuous with respect to~$\Q$
for the weak convergence. Since $f$ is continuous by
Assumption~\eqref{assump_f}, we first notice that when $\Q_n$ converges
weakly to~$\Q$, $\E_{\Q_n}[f(Y_t)|X_t=x]$ converges to
$\E_{\Q}[f(Y_t)|X_t=x]$ when $\Q(X_t=x)>0$, unless for an at most countable set of
times~$t$ depending on~$\Q$. Then, following~\cite{Meleard}, it comes out that if
$s,t,s_1,\dots,s_q$ are taken outside a countable set depending on~$\pi^\infty$, $F$
is~$\pi^\infty$-a.s. continuous. In this case, we have:
$$\E[F(\mu^{N_k})^2]\underset{k\rightarrow + \infty}{\rightarrow}
\int_{\cP(E)} F(\Q)^2 \pi^{\infty}(d\Q).$$

By definition of~$\mu^N$, we have:
\begin{eqnarray*}
  F(\mu^N)&=& \frac{1}{N} \sum_{i=1}^N \Big[(M^{i,N}_t-M^{i,N}_s)\prod_{l=1}^q g_l(X^{i,N}_{s_q},Y^{i,N}_{s_q})\Big], \text{ where }  \\
 M^{i,N}_t&=&  \phi(X^{i,N}_t,Y^{i,N}_t)
  -\displaystyle
\int_0^t\Big\{  b(u,X^{i,N}_u,Y^{i,N}_u)\partial_y\phi(X^{i,N}_u,Y^{i,N}_u)+\frac{1}{2}\sigma^2(u,X^{i,N}_u,Y^{i,N}_u)\partial^2_y\phi(X^{i,N}_u,Y^{i,N}_u) 
  \\  && \hspace{-2cm}
+ \frac{\lambda(u,X^{i,N}_{u})f(Y^{i,N}_u) \sum_{j=1}^{N} \ind{X^{j,N}_u=X^{i,N}_{u}} }{\sum_{j=1}^{N}
  f(Y^{j,N}_u)\ind{X^{j,N}_u=X^{i,N}_{u}}}
[\phi(X^{i,N}_u+1,Y^{i,N}_u+\gamma(u,X^{i,N}_u,Y^{i,N}_u))-\phi(X^{i,N}_u,Y^{i,N}_u)]\Big\}du.
\end{eqnarray*}
We observe now that $[M^{i,N},M^{j,N}]_t=0$ for $i\not = j$ since, by construction these
martingales do not jump together and $\langle W^i,W^j
\rangle_t=0$. Therefore, we get that:
$$\E[F(\mu^N)^2]=\frac{1}{N}\E \Big[(M^{1,N}_t-M^{1,N}_s)^2\prod_{l=1}^q
g_l(X^{1,N}_{s_q},Y^{1,N}_{s_q})^2 \Big] \le C/N,$$
thanks to the boundedness assumption made on functions $g_l$ and $\phi$. It
comes out that $F(\Q)=0$, $\pi^\infty(d\Q)$ almost surely. This holds in fact for
any function~$F$ given by~\eqref{def_F}, provided that $s,t,s_1,\dots,s_q$ are
taken outside a countable set depending on~$\pi^\infty$. Since the process
$(X_t,Y_t)$ is càdlàg, this is sufficient to show that any measure in the
support of~$\pi^\infty$ solves the Martingale Problem~\eqref{pb_mg}. In
particular, we get the following result.

\begin{prop}\label{existence_mes_inv}
  There exists a measure~$\probinv \in \cP(E)$ solving the Martingale Problem~\eqref{pb_mg}.
\end{prop}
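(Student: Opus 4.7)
The plan is to harvest Proposition~\ref{existence_mes_inv} directly from the machinery already set up: the tightness of $(\pi^N)_N$, the weak limit~$\pi^\infty$, and the bound $\E[F(\mu^N)^2]\le C/N$. In other words, all the analytical work has been done; what remains is the measure-theoretic packaging.

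First, I would fix a countable dense set $\cT\subset[0,T]$ and a countable family $\Phi$ of test functions $\phi\in\cC^{0,2}_b(\R^2,\R)$ and $(g_l)\in\cC_b(\R^2,\R)$ that is separating (for example a polynomial basis truncated by a smooth cutoff together with derivatives, plus the appropriate products). For each fixed choice of $(\phi,g_1,\dots,g_q)\in\Phi$ and each finite tuple of times in $\cT$, one gets from $\E[F(\mu^{N_k})^2]\le C/N_k$, combined with the weak convergence $\pi^{N_k}\to\pi^\infty$, the identity
\begin{equation*}
\int_{\cP(E)} F(\Q)^2\,\pi^\infty(d\Q) = 0,
\end{equation*}
valid provided the continuity points issue is handled, which (as noted in the text) restricts the admissible times to lie outside an at most countable set depending on~$\pi^\infty$. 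Since $\cT$ is countable, we can shrink it once and for all to an uncountable (actually still dense) subset $\cT'\subset\cT$ on which $F$ is $\pi^\infty$-a.s.\ continuous for every choice in $\Phi$.

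Next I would take the countable union of the $\pi^\infty$-null sets obtained this way and pick any $\probinv\in\cP(E)$ in the complement; such a measure exists because the complement has full $\pi^\infty$-measure and $\pi^\infty$ is a probability measure on the non-empty space $\cP(E)$. This $\probinv$ satisfies
\begin{equation*}
F(\probinv)=0 \quad \text{for every } (\phi,g_1,\dots,g_q)\in\Phi \text{ and every } s_1\le\dots\le s_q\le s\le t \text{ in }\cT'.
\end{equation*}
By density of $\cT'$ in $[0,T]$, right-continuity of the coordinate process $(X_t,Y_t)$ on~$E$, and bounded convergence, the identity extends from $\cT'$ to all times in $[0,T]$. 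Separability of~$\Phi$ in the class of functions $\phi\in\cC^{0,2}_b$ with bounded derivatives then upgrades the conclusion to arbitrary such $\phi$, yielding that the process
$$\phi(X_t,Y_t)-\phi(X_0,Y_0)-\int_0^t \cA_u\phi(X_u,Y_u)\,du$$
is a $\cF_t^{X,Y}$-martingale under $\probinv$ for every $\phi\in\cC^{0,2}_b$, where $\cA_u$ is the generator appearing in~\eqref{pb_mg}. Finally, the initial condition $\cL_{init}$ is preserved because each particle $(X^{i,N}_0,Y^{i,N}_0)$ is drawn from $\cL_{init}$, so the one-dimensional marginal of $I(\pi^N)$ at time~$0$ equals $\cL_{init}$ exactly, hence so does the marginal of $\probinv$ by~\eqref{eq_pi_N}.

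The main obstacle is essentially cosmetic rather than substantive: one must be careful that the exceptional countable set of times in the continuity argument does not depend on the specific~$\probinv$ we pick, and that the separating family~$\Phi$ we choose is simultaneously rich enough to characterize the martingale problem yet countable enough for a single null set to suffice. Both points are standard (see the reference to~\cite{Meleard}) and boil down to choosing a well-behaved countable generating family. Once this technicality is settled, the proposition is immediate.
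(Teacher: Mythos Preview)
Your argument is essentially the paper's own: tightness of $(\pi^N)$, extraction of a weak limit $\pi^\infty$, the bound $\E[F(\mu^N)^2]\le C/N$, and the conclusion that $F(\Q)=0$ for $\pi^\infty$-a.e.\ $\Q$, followed by a density/right-continuity extension over times. The only slip is the sentence ``we can shrink $\cT$ to an uncountable \dots\ subset $\cT'\subset\cT$'': a subset of a countable set cannot be uncountable, so what you want is to remove the (at most countable) bad set of times from $[0,T]$ itself, obtaining a co-countable dense $\cT'\subset[0,T]$, and only \emph{then} pick a countable dense subset of $\cT'$ over which to take the union of $\pi^\infty$-null sets.
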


\subsection{Uniqueness of a solution to~\eqref{pb_mg}}\label{section_unicite}

Let $\probinv\in \mathcal{P}(E)$ denote a probability measure solving the Martingale Problem~\eqref{pb_mg}. We know that such a probability exists thanks to Proposition~\ref{existence_mes_inv}. We want to show that it is indeed
unique. To do so, we consider another probability~$\Q \in
\mathcal{P}(E)$ and study the total variation distance~$V_T(\Psi(\probinv)-\Psi(\Q))$ between~$\probinv$ and
$\Q$ over~$E$. For $t\in [0,T]$, $\probinv,\Q \in \cP(E)$, we denote by
$\probinv\big|_{[0,t]}$ and  $\Q\big|_{[0,t]}$ their restriction to the paths on the time interval~$[0,t]$. We also
set $V_t(\probinv-\Q)$ the total variation distance between  $\probinv\big|_{[0,t]}$ and
$\Q\big|_{[0,t]}$. 

\begin{lemma}\label{lem_tau}
Let $\tau:=\inf \{ t \ge 0,  X^{\probinv}_t \not = X^{\Q}_t \}$ denote the first
time when $X^{\probinv}$ and $X^{\Q}$ do not jump together. We have
$$V_T(\Psi(\probinv)-\Psi(\Q)) \le 2 \P(\tau \le T). $$
\end{lemma}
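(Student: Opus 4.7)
The plan is to exploit the fact that $(X^\probinv, Y^\probinv)$ and $(X^\Q, Y^\Q)$ are built on the same probability space from the common Brownian motion~$W$, the common uniform variables $(U^k)$ and the common exponential times $(T^k)$, as in the explicit construction underlying Proposition~\ref{unicite_eds_sauts}. This coupling realizes the laws $\Psi(\probinv)$ and $\Psi(\Q)$ simultaneously, so a standard coupling bound for the total variation distance reduces the lemma to estimating the probability that the two coupled paths differ on $[0,T]$.

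First, I would show the inclusion
$$\{\tau > T\} \subset \Big\{(X^\probinv_t, Y^\probinv_t)_{t\in[0,T]} = (X^\Q_t, Y^\Q_t)_{t\in[0,T]}\Big\}.$$
By the very definition of $\tau$, one has $X^\probinv_s = X^\Q_s$ for every $s\in[0,\tau)$, and consequently $X^\probinv_{s-} = X^\Q_{s-}$ for every $s\le\tau$. On the stochastic interval $[0,\tau)$, both $Y^\probinv$ and $Y^\Q$ therefore solve the same jump SDE
$$Z_t = Y_0 + \int_0^t b(s, X^\probinv_s, Z_s)\, ds + \int_0^t \sigma(s, X^\probinv_s, Z_s)\, dW_s + \int_0^t \gamma(s-, X^\probinv_{s-}, Z_{s-})\, dX^\probinv_s,$$
driven by the same Brownian motion and the same counting process, and starting from the same $\cF_0$-measurable initial value. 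The strong pathwise uniqueness invoked in Proposition~\ref{unicite_eds_sauts} then forces $Y^\probinv_s = Y^\Q_s$ for all $s<\tau$, which is precisely the claimed inclusion (noting that on $\{\tau>T\}$ we have $[0,T]\subset[0,\tau)$).

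Second, I would invoke the standard coupling bound for the total variation distance: since $(X^\probinv, Y^\probinv)$ and $(X^\Q, Y^\Q)$ are two random elements of $E$ with respective laws $\Psi(\probinv)$ and $\Psi(\Q)$, one has
$$V_T(\Psi(\probinv) - \Psi(\Q)) \le 2\, \P\Big((X^\probinv_t, Y^\probinv_t)_{t\in[0,T]} \neq (X^\Q_t, Y^\Q_t)_{t\in[0,T]}\Big) \le 2\, \P(\tau \le T),$$
where the last inequality follows from the inclusion established above.

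The only delicate point is justifying the pathwise equality $Y^\probinv = Y^\Q$ strictly before~$\tau$: because both processes are built from the same Brownian motion, the same initial condition and the same driving $X$-process on $[0,\tau)$, strong pathwise uniqueness applies directly, and nothing deeper than careful bookkeeping of the coupling is required.
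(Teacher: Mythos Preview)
Your proof is correct and follows essentially the same route as the paper. The only difference is one of presentation: you invoke the coupling bound $V_T(\Psi(\probinv)-\Psi(\Q))\le 2\,\P\big((X^\probinv,Y^\probinv)\neq(X^\Q,Y^\Q)\text{ on }[0,T]\big)$ as a standard fact and spend your effort justifying the pathwise equality on $\{\tau>T\}$, whereas the paper does the opposite --- it spells out the coupling inequality by hand (via the Hahn--Jordan decomposition and $\tilde V\ge\tfrac12 V$) and treats the pathwise equality as self-evident, only stating it explicitly right after the lemma.
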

\begin{proof}
  Let us recall that for any signed measure~$\eta$ on~$E$, the
total variation of~$\eta$ is given by 
$$V(\eta)=\eta^+(E)+\eta^-(E),$$
where $\eta=\eta^+-\eta^-$ is the Hahn-Jordan decomposition
of~$\eta$. Besides, we clearly have
$$\frac{1}{2} V(\eta) \le \tilde{V}(\eta) \le V(\eta),$$
where $\tilde{V}(\eta)= \sup \{ |\eta(A)|, A\subset E \ measurable\}$.

We have for any measurable set~$A$ of~$E$,
\begin{eqnarray*}
&&  \P((X^{\probinv}_t,Y^{\probinv}_t)_{t\in[0,T]} \not =
(X^{\Q}_t,Y^{\Q}_t)_{t\in[0,T]}) \\&&\ge \P((X^{\probinv}_t,Y^{\probinv}_t)_{t\in[0,T]} \in A
, (X^{\Q}_t,Y^{\Q}_t)_{t\in[0,T]} \not \in A)\\
&&=\P((X^{\probinv}_t,Y^{\probinv}_t)_{t\in[0,T]} \in A)-\P((X^{\probinv}_t,Y^{\probinv}_t)_{t\in[0,T]} \in A
, (X^{\Q}_t,Y^{\Q}_t)_{t\in[0,T]}  \in A) \\&&\ge 
\P((X^{\probinv}_t,Y^{\probinv}_t)_{t\in[0,T]} \in A)-\P((X^{\Q}_t,Y^{\Q}_t)_{t\in[0,T]}
\in A).
\end{eqnarray*}
By taking the supremum over~$A$, we get
$$\P((X^{\probinv}_t,Y^{\probinv}_t)_{t\in[0,T]} \not =
(X^{\Q}_t,Y^{\Q}_t)_{t\in[0,T]}) \ge \tilde{V}_T(\Psi(\probinv)-\Psi(\Q)),$$
which gives the claim.\end{proof}

Up to time~$\tau$, we
have $(X^{\probinv}_t,Y^{\probinv}_t)=(X^{\Q}_t,Y^{\Q}_t)$, and we get that
$$  \ind{\tau \le t} - \int_0^t \ind{\tau > s}\lambda(s,X^{\probinv}_s)f(Y^{\probinv}_s)
\left|\frac{1}{\varphi^{\probinv}(s,X^{\probinv}_s)}-\frac{1}{\varphi^{\Q}(s,X^{\probinv}_s)}
\right|ds$$
is a martingale. In particular, we have
\begin{eqnarray*}
  \frac{d \P(\tau \le t)}{dt}&=&\E\left[\ind{\tau > t}\lambda(t,X^{\probinv}_t)f(Y^{\probinv}_t)
\left|\frac{1}{\varphi^{\probinv}(t,X^{\probinv}_t)}-\frac{1}{\varphi^{\Q}(t,X^{\probinv}_t)}
\right| \right] \\ &\le& \frac{\overline{\lambda} \overline{f} }{\underline{f}^2}
\E\left[|\varphi^{\Q}(t,X^{\probinv}_t)-\varphi^{\probinv}(t,X^{\probinv}_t) |\right].
\end{eqnarray*}
Now, let us observe that
$\varphi^{\probinv}(t,x)-\varphi^{\Q}(t,x)=\frac{\E_\probinv[f(Y_t)\ind{X_t=x}]-\E_{\Q}[f(Y_t)\ind{X_t=x}]}{\probinv(X_t=x)}
+\frac{\E_{\Q}[f(Y_t)|X_t=x]}{\probinv(X_t=x)}[\Q(X_t=x)-\probinv(X_t=x)]$. Thus, we get
that $|\varphi^{\probinv}(t,x)-\varphi^{\Q}(t,x)|\le  \frac{1}{\probinv(X_t=x)} \left(|
  \E_\probinv[f(Y_t)\ind{X_t=x}]-\E_{\Q}[f(Y_t)\ind{X_t=x}]|+\overline{f} |\Q(X_t=x)-\probinv(X_t=x)|
\right), $ and therefore (we use here that $\probinv$ is invariant, and is the law
of $(X^\probinv_t,Y^\probinv_t)_{t \ge 0}$)
\begin{eqnarray*}
  \E[|\varphi^{\Q}(t,X^{\probinv}_t)-\varphi^{\probinv}(t,X^{\probinv}_t) |] &\le&  \sum_{x \in
    \cL_M} |
  \E_\probinv[f(Y_t)\ind{X_t=x}]-\E_{\Q}[f(Y_t)\ind{X_t=x}]|+\overline{f} |\Q(X_t=x)-\probinv(X_t=x)| \\
  & \le & 2\overline{f}  V_t(\Psi(\probinv)-\Psi(\Q)). 
\end{eqnarray*}
To check the last inequality, one has to observe that for a simple nonnegative
function $f(y)=\sum_{i=1}^n f_i \ind{A_i}$, where $A_i \subset \R$ are Borel sets, we
have
$|\E_\probinv[f(Y_t)\ind{X_t=x}]-\E_{\Q}[f(Y_t)\ind{X_t=x}]|\le \sum_{i=1}^n f_i
|\probinv(X_t=x, Y_t \in A_i)-\Q(X_t=x, Y_t \in A_i)|\le\overline{f}
V_t(\probinv-\Q) $. By passing to the limit, this property holds for
any bounded measurable (hence continuous) function~$f$.

To sum up, we have for $t \in [0,T]$,
$$V_T(\Psi(\probinv)-\Psi(\Q))\le 2 \P(\tau \le T) \le
4\frac{\overline{\lambda} \ \overline{f}^2
}{\underline{f}^2}\int_0^TV_s(\probinv-\Q)ds \le 4\frac{\overline{\lambda} \
  \overline{f}^2 }{\underline{f}^2} TV_T(\probinv-\Q),$$
since $V_0(\Psi(\probinv)-\Psi(\Q))=0$ and $s\mapsto V_s(\probinv-\Q)$ is nondecreasing.
Let us recall that the set of bounded countably additive measures on~$E$ endowed with the
total variation norm is a Banach space. If $ 4\frac{\overline{\lambda} \
  \overline{f}^2 }{\underline{f}^2} T<1$, we get by
the Banach fixed point theorem that $\Psi$ has a unique fixed
point that is necessarily~$\probinv$. Otherwise, we get by iterating that:
$$V_T(\Psi^{(k)}(\probinv)-\Psi^{(k)}(\Q))\le 4\frac{\overline{\lambda} \
  \overline{f}^2 }{\underline{f}^2} \int_0^T
V_s(\Psi^{(k-1)}(\probinv)-\Psi^{(k-1)}(\Q))ds\le
\frac{\left[4\frac{\overline{\lambda} \
  \overline{f}^2 }{\underline{f}^2} T \right]^k}{k!}V_T(\probinv-\Q). $$
When $k$ is large enough, $\frac{\left[4\frac{\overline{\lambda} \
  \overline{f}^2 }{\underline{f}^2} T \right]^k}{k!}<1$ and
$\Psi^{(k)}$ is a contraction mapping. Thus, $\Psi^{(k)}$ has a unique fixed
point that is necessarily~$\probinv$.

This concludes the proof of Theorem~\ref{thm_continuous}. Besides, using the notations of Section~\ref{section_existence}, we get that any convergent subsequence of~$\pi^N$ should converge to~$\delta_{\probinv}(d\Q)$. This gives the weak convergence of~$\pi^N$ towards~$\delta_{\probinv}(d\Q)$. By~\eqref{eq_pi_N}, we get that any particle converges in law towards~$\probinv$.

\subsection{Convergence speed towards $\probinv$}\label{section_speed}

Now that we have proved that each particle converges to the invariant probability measure, we are interested in characterizing the speed of convergence of the interacting particle system towards this measure. This question is of practical importance, since one would like to use the following approximation
\begin{equation}\label{approx_ips}
\E_{\probinv}[F((X_t,Y_t)_{t\in [0,T]})]\approx \frac{1}{N} \sum_{i=1}^N
F(({X}^{i,N}_t,{Y}^{i,N}_t)_{t\in [0,T]}),
\end{equation}
and have an estimate of the error involved.

First, we need to introduce some additional notations. We consider the same particle system
$(X^{i,N}_t,Y^{i,N}_t)_{t\ge 0}$ as
in Section~\ref{sec_ips}, constructed with the random variables $T^{i,k}$, $U^{i,k}$ and
$(W^i_t,t\ge 0)$. With these variables, we construct now the processes
$(\bar{X}^i_t, \bar{Y}^i_t)_{t\ge 0}$ as the unique solution of
\begin{equation}\label{continuous_std_sde}
\begin{cases}
  \bar{X}^i_t=\bar{X}^i_0+\sum_{k, T^{i,k}\le t} \mathbf{1}_{\left\{U^{i,k} \le\frac{\underline{f}}{\overline{\lambda}\overline{f}} \frac{f(\bar{Y}^i_{T^{i,k}-})\lambda(T^{i,k}-,\bar{X}^i_{T^{i,k}-})}{\varphi^{\probinv}(T^{i,k}-,\bar{X}^i_{T^{i,k}-})}\right\}}\\
  \bar{Y}^i_t=\bar{Y}^i_0+\int_0^t b(s,\bar{X}^i_s,\bar{Y}^i_s)ds +\int_0^t \sigma (s,\bar{X}^i_s,\bar{Y}^i_s)dW^i_s + \int_0^t \gamma(s-,\bar{X}^i_{s-},\bar{Y}^i_{s-})d\bar{X}^i_s.
\end{cases}
\end{equation}


By construction, the law of $(\bar{X}^i_t, \bar{Y}^i_t)_{t\in[0,T]}$ is the
invariant probability law~$\probinv$, since $\Psi(\probinv)=\probinv$. By using the same argument as in Lemma~\ref{lem_tau}, we have:
$$V_T(\cL((X^{1,N}_t,Y^{1,N}_t)_{t\in [0,T]})-\probinv)\le 2
\P((X^{1,N}_t,Y^{1,N}_t)_{t\in [0,T]} \not
=(\bar{X}^{1}_t,\bar{Y}^{1}_t)_{t\in [0,T]}) = 2 \P(\tau^1\le T),$$
where $\tau^1=\inf \{ t\ge 0, \bar{X}^{1}_t \not = X^{1,N}_t \}$. We also set
for $i=2,\dots,N$, $\tau^i=\inf \{ t\ge 0, \bar{X}^{i}_t \not = X^{i,N}_t \}$.
\begin{prop}\label{prop_speed}
Let us assume that $\cL_{init}$ is such that $\P(X_0=x)>0$ for any $x \in \cL_M$. Then, there is a constant~$K>0$ such that 
$$ \P(\tau^1\le T) \le \frac{K}{\sqrt{N}}. $$
\end{prop}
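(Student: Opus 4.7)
The strategy is to exploit the coupling between particle $1$ and its i.i.d.\ copy $\bar X^1$, compensate the point process of disagreements to obtain a differential inequality for $\P(\tau^1\le t)$ driven by $|\varphi^N-\varphi^{\probinv}|$, and close a Gronwall inequality by controlling this gap in terms of a propagation-of-chaos contribution of size $\P(\tau^1\le t)$ and an i.i.d.\ fluctuation of size $1/\sqrt N$. By construction, $X^{1,N}$ and $\bar X^1$ are driven by the same Brownian motion $W^1$, the same Poisson times $T^{1,k}$ and the same uniform variables $U^{1,k}$, hence $(X^{1,N}_t,Y^{1,N}_t)=(\bar X^1_t,\bar Y^1_t)$ for every $t<\tau^1$. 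Writing
$$\varphi^N(t,x):=\frac{\sum_{j=1}^N f(Y^{j,N}_t)\ind{X^{j,N}_t=x}}{\sum_{j=1}^N \ind{X^{j,N}_t=x}},$$
a disagreement at $T^{1,k}<\tau^1$ occurs iff $U^{1,k}$ falls between the two acceptance thresholds, whose difference is governed by $|1/\varphi^{\probinv}-1/\varphi^N|$. Compensating this counting process and using $\underline f\le\varphi^{\probinv},\varphi^N\le\overline f$ yields
$$\frac{d}{dt}\P(\tau^1\le t)\le \frac{\overline\lambda\,\overline f}{\underline f^{\,2}}\,\E\!\left[\,|\varphi^N(t,\bar X^1_t)-\varphi^{\probinv}(t,\bar X^1_t)|\,\right].$$

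The core estimate to prove is that, for every $x\in\cL_M$ and $t\in[0,T]$,
$$\E\,|\varphi^N(t,x)-\varphi^{\probinv}(t,x)|\le C\Bigl(\P(\tau^1\le t)+\frac{1}{\sqrt N}\Bigr). \qquad (\star)$$
I would introduce $\bar A^N(t,x)=\frac{1}{N}\sum_j f(\bar Y^j_t)\ind{\bar X^j_t=x}$, $\bar p^N(t,x)=\frac{1}{N}\sum_j\ind{\bar X^j_t=x}$ and the analogous $A^N,p^N$ built from the particle system, so that $\varphi^N=A^N/p^N$, $\bar\varphi^N=\bar A^N/\bar p^N$ and $\varphi^{\probinv}=A^{\probinv}/p^{\probinv}$ with $A^{\probinv}(t,x)=\E_\probinv[f(Y_t)\ind{X_t=x}]$, $p^{\probinv}(t,x)=\probinv(X_t=x)$. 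Since $(X^{j,N},Y^{j,N})$ and $(\bar X^j,\bar Y^j)$ coincide on $\{\tau^j>t\}$, one obtains $|A^N-\bar A^N|\le \frac{2\overline f}{N}\#\{j:\tau^j\le t\}$ and $|p^N-\bar p^N|\le \frac{2}{N}\#\{j:\tau^j\le t\}$; by exchangeability of the $\tau^j$, their expectations are bounded by $C\,\P(\tau^1\le t)$. The i.i.d.\ parts $\E|\bar A^N-A^{\probinv}|+\E|\bar p^N-p^{\probinv}|$ are handled by the standard variance estimate for averages of $N$ bounded i.i.d.\ random variables and contribute a term of order $1/\sqrt N$.

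The main obstacle is turning these numerator/denominator bounds into a bound on the ratio, because $p^N$ can be arbitrarily small. This is where the assumption $\P(X_0=x)>0$ for all $x\in\cL_M$ enters: combined with the fact that $X$ is non-decreasing with total jump intensity uniformly bounded by $\overline\lambda\,\overline f/\underline f$, it yields the uniform lower bound
$$p^{\probinv}(t,x)\ge \P(X_0=x)\,\exp\!\Bigl(-\frac{\overline\lambda\,\overline f\,T}{\underline f}\Bigr)=:c_0>0.$$
On the good event $\Omega_{t,x}:=\{p^N(t,x)\ge c_0/2\}$, the identity
$\varphi^N-\varphi^{\probinv}=(A^N-A^{\probinv})/p^N+\varphi^{\probinv}(p^{\probinv}-p^N)/p^N$
gives $|\varphi^N-\varphi^{\probinv}|\ind{\Omega_{t,x}}\le \frac{2}{c_0}(|A^N-A^{\probinv}|+\overline f\,|p^N-p^{\probinv}|)$; off $\Omega_{t,x}$, the trivial bound $|\varphi^N-\varphi^{\probinv}|\le\overline f$ together with Markov's inequality $\P(\Omega_{t,x}^c)\le \frac{2}{c_0}\E|p^N-p^{\probinv}|$ contributes again a term of order $\P(\tau^1\le t)+1/\sqrt N$. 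Assembling these pieces yields $(\star)$.

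Summing $(\star)$ over the finite set $\cL_M$ and injecting it into the compensator inequality, I obtain the linear differential inequality $\phi'(t)\le C_1\phi(t)+C_2/\sqrt N$ for $\phi(t):=\P(\tau^1\le t)$ with $\phi(0)=0$, whose solution satisfies $\phi(T)\le K/\sqrt N$ by Gronwall's lemma, which is the announced rate.
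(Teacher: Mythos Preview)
Your argument is correct and follows the same overall architecture as the paper's proof: compensate the disagreement indicator, bound the rate by the gap between the empirical and limiting conditional expectations, split this gap into a propagation-of-chaos term proportional to $\P(\tau^1\le t)$ and an i.i.d.\ fluctuation of order $1/\sqrt N$, and close by Gronwall.

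The technical route differs in how the random denominator is controlled. The paper does \emph{not} introduce a good event on $p^N$: it writes the decomposition with the \emph{deterministic} weight $1/\bar q_t(x)=1/\probinv(X_t=x)$ as leading factor, separates out particle~$1$ from the sums (introducing an auxiliary independent copy $(\bar X^{N+1},\bar Y^{N+1})$ to restore $N$ i.i.d.\ terms), and then exploits that the resulting i.i.d.\ quantity $A(x)$ is independent of $\bar X^1_t$. This yields $\E[A(\bar X^1_t)]=\sum_x \bar q_t(x)\E[A(x)]\le C\sum_x\sqrt{\bar q_t(x)}/\sqrt N\le C\sqrt{M+1}/\sqrt N$ by Cauchy--Schwarz, \emph{without} using the positivity assumption; that assumption is invoked only at the very end to bound the cross term $\E\bigl[\ind{\tau^2\le t}/\bar q_t(\bar X^1_t)\bigr]$. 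Your good-event/Markov argument uses the assumption from the outset via $c_0$ and, by summing $(\star)$ crudely over $\cL_M$, concedes an extra factor $M{+}1$. Both are harmless for the stated result; the paper's route has the merit of isolating precisely where the hypothesis $\P(X_0=x)>0$ is needed (which is the content of the Remark following the corollary), while yours is a perfectly standard and robust alternative.
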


\begin{proof}
By construction, the processes  $\bar{X}^{1}$ and $X^{1,N}$ may become different at the times $T^{1,k}$ if $U^{1,k}\le \frac{\underline{f}}{\overline{\lambda}\overline{f}}   \frac{\lambda({T^{1,k}-},\bar{X}^{1}_{T^{1,k}-})
      f(\bar{Y}^{1}_{T^{1,k}-})}{\varphi^\probinv({T^{1,k}-},\bar{X}^{1}_{T^{1,k}-})}$ and $U^{1,k}>\frac{\underline{f}}{\overline{\lambda}\overline{f}} \frac{\lambda({T^{1,k}-},X^{1,N}_{T^{1,k}-})
      f(Y^{1,N}_{T^{1,k}-})}{\frac{\sum_{j=1}^N f(Y^{j,N}_{T^{1,k}-}) \ind{X^{j,N}_{T^{1,k}-}=X^{1,N}_{T^{1,k}-}}
      }{\sum_{j=1}^N \ind{X^{j,N}_{T^{1,k}-}=X^{1,N}_{T^{1,k}-}}}}$, or
    conversely. Thus, $\mathbf{1}_{\{\tau^1\le t\}}-\int_0^t \ind{\tau^1 > s}\frac{\underline{f}}{\overline{\lambda}\overline{f}} 
  \left|\frac{\lambda(s,\bar{X}^{1}_s)
      f(\bar{Y}^{1}_s)}{\varphi^\probinv(s,\bar{X}^{1}_s)}-\frac{\lambda(s,X^{1,N}_s)
      f(Y^{1,N}_s)}{\frac{\sum_{j=1}^N f(Y^{j,N}_s) \ind{X^{j,N}_s=X^{1,N}_s}
      }{\sum_{j=1}^N \ind{X^{j,N}_s=X^{1,N}_s}}} \right|ds$ is a martingale and we have
\begin{eqnarray*}
\P(\tau^1\le T) &=& \frac{\underline{f}}{\overline{\lambda}\overline{f}} \E \left[ \int_0^T \ind{\tau^1 > t}
  \left|\frac{\lambda(t,\bar{X}^{1}_t)
      f(\bar{Y}^{1}_t)}{\varphi^\probinv(t,\bar{X}^{1}_t)}-\frac{\lambda(t,X^{1,N}_t)
      f(Y^{1,N}_t)}{\frac{\sum_{j=1}^N f(Y^{j,N}_t) \ind{X^{j,N}_t=X^{1,N}_t}
      }{\sum_{j=1}^N \ind{X^{j,N}_t=X^{1,N}_t}}} \right|dt\right].
\end{eqnarray*}
Let us observe that on $\{ \tau^1 > t \}$, we have
$(\bar{X}^{1}_t,\bar{Y}^{1}_t)=(X^{1,N}_t,Y^{1,N}_t)$. Therefore,
\begin{eqnarray}\label{vitesse_1}
\P(\tau^1\le T) &\le & \underline{f}\E \left[ \int_0^T \ind{\tau^1 > t}
  \left|\frac{1}{\varphi^\probinv(t,\bar{X}^{1}_t)}-\frac{1+\sum_{j=2}^N \ind{X^{j,N}_t=\bar{X}^{1}_t}}{ f(Y^{1,N}_t)+\sum_{j=2}^N
        f(Y^{j,N}_t) \ind{X^{j,N}_t=\bar{X}^{1}_t}        
      } \right|dt\right]. \ \ \ \ \ \ \ \
\end{eqnarray}
Now, we study $ \left|\frac{1}{\varphi^\probinv(t,x)}-\frac{1+\sum_{j=2}^N \ind{X^{j,N}_t=x}}{ f(Y^{1,N}_t)+\sum_{j=2}^N f(Y^{j,N}_t) \ind{X^{j,N}_t=x} } \right|$ for $x \in \cL_M$, and set $$\bar{q}_t(x)=\probinv(X_t=x).$$
      
When $\bar{q}_t(x)>0$, we have 
\begin{eqnarray*}
  &&\hspace{-1.5cm}\left|\frac{1}{\varphi^\probinv(t,x)}-\frac{1+\sum_{j=2}^N \ind{X^{j,N}_t=x}}{ f(Y^{1,N}_t)+\sum_{j=2}^N
        f(Y^{j,N}_t) \ind{X^{j,N}_t=x}        
      } \right| 
    \le \left| \frac{ \bar{q}_t(x)-\frac{1}{N} \left(1+\sum_{j=2}^N
    \ind{X^{j,N}_t=x}\right) }{\E_{\probinv}[f(Y_t) \ind{X_t=x}]}\right| 
\\
&&+\frac{1}{\underline{f} \E_{\probinv}[f(Y_t) \ind{X_t=x}]  } \left|\E_{\probinv}[f(Y_t) \ind{X_t=x}] - \frac{1}{N}\left(f(Y^{1,N}_t)+ \sum_{j=2}^N
    f(Y^{j,N}_t)\ind{X^{j,N}_t=x}\right) \right|
  \\ &&\le \frac{1}{\underline{f} \bar{q}_t(x)} \left|\bar{q}_t(x)-\frac{1}{N} \left(1+\sum_{j=2}^N
    \ind{X^{j,N}_t=x}\right) \right|  \\ &&+ \frac{1}{\underline{f}^2\bar{q}_t(x)}\left|\E_{\probinv}[f(Y_t) \ind{X_t=x}] - \frac{1}{N}\left(f(Y^{1,N}_t)+ \sum_{j=2}^N
    f(Y^{j,N}_t)\ind{X^{j,N}_t=x}\right) \right|.
  \end{eqnarray*}
We analyze these two terms in a similar manner. We introduce
$(\bar{X}^{N+1}_t,\bar{Y}^{N+1}_t)_{t\in[0,T]}$ another copy of
$(\bar{X}^1_t,\bar{Y}^1_t)_{t\in[0,T]}$, which is independent from all other
existing processes. We have:
\begin{eqnarray}\label{vitesse_2}
\left| \bar{q}_t(x)-\frac{1}{N} \left(1+\sum_{j=2}^N
    \ind{X^{j,N}_t=x}\right) \right|  
    &\le& \left| \bar{q}_t(x)-\frac{1}{N} \sum_{j=2}^{N+1}
    \ind{\bar{X}^{j}_t=x} \right| + \frac{1}{N}\left|\sum_{j=2}^N
    \ind{\bar{X}^{j}_t=x}-\ind{X^{j,N}_t=x} \right| + \frac{1}{N} \nonumber\\
  &\le &\left| \bar{q}_t(x)-\frac{1}{N} \sum_{j=2}^{N+1}
    \ind{\bar{X}^{j}_t=x} \right| + \frac{1}{N} \sum_{j=2}^N
    \ind{\tau^j\le t} + \frac{1}{N},
  \end{eqnarray}
  since $\bar{X}^{j}_t$ and $X^{j,N}_t$ may be different only on $\{\tau^j \le
  t\}$. Similarly, we have
  
\begin{eqnarray}\label{vitesse_3}
&&\left|\E_{\probinv}[f(Y_t) \ind{X_t=x}] - \frac{1}{N}\left(f(Y^{1,N}_t)+ \sum_{j=2}^N
    f(Y^{j,N}_t)\ind{X^{j,N}_t=x}\right) \right| \nonumber\\
  &&\le \left|\E_{\probinv}[f(Y_t) \ind{X_t=x}] -\frac{1}{N} \sum_{j=2}^{N+1}
    f(\bar{Y}^{j}_t)\ind{\bar{X}^{j}_t=x} \right|+\frac{\overline{f}}{N} \sum_{j=2}^N
    \ind{\tau^j\le t} + \frac{\overline{f}}{N}.
\end{eqnarray}
We introduce
$$A(x)=\frac{1}{ \bar{q}_t(x)}\left( \left| \bar{q}_t(x)-\frac{1}{N} \sum_{j=2}^{N+1}
    \ind{\bar{X}^{j}_t=x}   \right| + \frac{1}{\underline{f}}\left|\E_{\probinv}[f(Y_t) \ind{X_t=x}] -\frac{1}{N} \sum_{j=2}^{N+1}
    f(\bar{Y}^{j}_t)\ind{\bar{X}^{j}_t=x} \right|\right).$$ This is a random function
  which is independent from $(\bar{X}^1_t,\bar{Y}^1_t)_{t\ge 0}$.
From~\eqref{vitesse_1},~\eqref{vitesse_2} and~\eqref{vitesse_3}, we obtain by
observing that $\tau^j$ and $\tau^1$ have the same law under~$\P$:
\begin{equation}\label{vitesse_4}
 \P(\tau^1\le T) \le    \int_0^T
  \E[A(\bar{X}^1_t)] + \left( 1+\frac{\overline{f}}
    {\underline{f}}\right) \left(\frac{1}{N} \E \left[
    \frac{1}{\bar{q}_t(\bar{X}^1_t)} \right] + \E \left[
    \frac{1}{\bar{q}_t(\bar{X}^1_t)} \mathbf{1}_{\tau^2\le t} \right] \right)    dt.
\end{equation}
First, we observe that $\E[
    \frac{1}{\bar{q}_t(\bar{X}^1_t)}]=\sum_{x, \ s. t.\ \bar{q}_t(x)>0} \frac{\bar{q}_t(x)}{\bar{q}_t(x)} \le M+1$.
Thanks to the independence of~$A(x)$ and $\bar{X}^1_t$,
$\E[A(\bar{X}^1_t)|\bar{X}^1_t=x]=\E[A(x)]$. On the one hand, we have:\\
$\E\left[ \left| \bar{q}_t(x)-\frac{1}{N} \left(\sum_{j=2}^{N+1}
    \ind{\bar{X}^{j}_t=x}\right) \right| \right] \le  \sqrt{\E\left[ \left( \bar{q}_t(x)-\frac{1}{N} (\sum_{j=2}^{N+1}
    \ind{\bar{X}^{j}_t=x}) \right)^2
\right]}=\frac{\sqrt{\bar{q}_t(x) (1-\bar{q}_t(x)) }}{\sqrt{N}}$. On the
other hand, we have similarly that:\\
\begin{align*}
&\E\left[\left|\E_{\probinv}[f(Y_t) \ind{X_t=x}] - \frac{1}{N}\left( \sum_{j=2}^{N+1}
    f(\bar{Y}^{j}_t)\ind{\bar{X}^{j}_t=x}\right) \right|\right] \\& \le
\sqrt{\E \left[ \left(\E_{\probinv}[f(Y_t) \ind{X_t=x}]-
      \frac{1}{N} \sum_{j=2}^{N+1}
    f(\bar{Y}^{j}_t)\ind{\bar{X}^{j}_t=x})  \right)^2\right]} \le\sqrt{\frac{1}{N} \E_{\probinv}(\ind{X_t=x}f^2(Y_t)) }\le\sqrt{\frac{\overline{f}^2}{N} \bar{q}_t(x) }
.
\end{align*}
Finally, we obtain that:
$$\E[A(x)] \le
\left(1+\frac{\overline{f}}{\underline{f}} \right)
\frac{1}{\sqrt{\bar{q}_t(x)}\sqrt{N}}.$$
By using the tower property of the conditional expectation, we get:
$$\E[A(\bar{X}^1_t)] \le
\left(1+\frac{\overline{f}}{\underline{f}} \right)
\frac{\sqrt{M+1}}{\sqrt{N}},$$
since we have $\sum_{x\in \cL_M}\sqrt{\bar{q}_t(x)} \le \sqrt{M+1}$ by the
Cauchy-Schwarz inequality.
From~\eqref{vitesse_4}
\begin{equation}\label{vitesse_5}
  \P(\tau^1\le T) \le  \left(1+\frac{\overline{f}}{\underline{f}} \right) \left(
   \frac{\sqrt{M+1}}{\sqrt{N}} +\frac{M+1}{N} +\int_0^T \E\left[
    \frac{1}{  \bar{q}_t(\bar{X}^1_t)} \mathbf{1}_{\tau^2\le t} \right]    dt
  \right).
\end{equation}
So far, we have not used the assumption $\bar{q}_0(x)=\P(X_0=x)>0$ for any $x\in \cL_M$. Since the jump intensity is bounded by $\frac{\overline{\lambda}\overline{f}}{\underline{f}}$, we necessarily have $ \bar{q}_t(x)\ge e^{-\frac{\overline{\lambda}\overline{f}}{\underline{f}} T}\bar{q}_0(x),$ for $t\in [0,T]$. Thus, there exists a constant $C>0$ (depending on $T$ and $\cL_{init}$) such that $\frac{1}{  \bar{q}_t(x)}\le C$ for $t\in[0,T]$, $x\in \cL_M$. Since $\tau^2$ and $\tau^1$ have the same law under~$\P$, this gives
\begin{equation}\label{vitesse_6}
  \P(\tau^1\le T) \le  \left(1+\frac{\overline{f}}{\underline{f}} \right) \left(
   \frac{\sqrt{M+1}}{\sqrt{N}} +\frac{M+1}{N} +C \int_0^T   \P(\tau^1\le t)  dt
  \right),
\end{equation}
and we easily conclude by Gronwall's lemma. 
\end{proof}

Now, we can have an estimate of the accuracy given by the approximation~\eqref{approx_ips}.
We assume that $F:E\rightarrow \R$ is a bounded measurable function. Then, we know by the central limit theorem that 
$$\sqrt{N}\left( \frac{1}{N} \sum_{i=1}^N
F((\bar{X}^{i}_t,\bar{Y}^{i}_t)_{t\in [0,T]})-\E_\probinv[F((X_t,Y_t)_{t\in [0,T]})]
\right) \underset{law}{\rightarrow} \cN(0,\sigma^2), $$
where $\sigma^2$ is the variance of $F((X_t,Y_t)_{t\in [0,T]})$ under~$\probinv$.

Since $F$ is bounded by a constant~$K>0$, we have by Proposition~\ref{prop_speed}
$$N^\alpha \left|\frac{1}{N} \sum_{i=1}^N
F((\bar{X}^{i}_t,\bar{Y}^{i}_t)_{t\in [0,T]})-\frac{1}{N} \sum_{i=1}^N
F(({X}^{i,N}_t,{Y}^{i,N}_t)_{t\in [0,T]})\right| \le \frac{K}{N^{1-\alpha}} \sum_{i=1}^N
\ind{\tau^i \le T}, $$
which converges for the $L^1$-norm to~$0$ when $N\rightarrow+\infty$  for $\alpha<1/2$. Combining both results, we finally get a lower estimate of the convergence rate. 
\begin{corollary}
Under the assumptions of Proposition~\ref{prop_speed}, we have for any bounded measurable function $F:E\rightarrow \R$ and any $0<\alpha<1/2$,
$$N^\alpha \left|\frac{1}{N} \sum_{i=1}^N
F(({X}^{i,N}_t,{Y}^{i,N}_t)_{t\in [0,T]})-\E_\probinv[F((X_t,Y_t)_{t\in [0,T]})]\right|\rightarrow 0 \text{ in probability}. $$
\end{corollary}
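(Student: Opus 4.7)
The plan is to decompose the error via triangle inequality into a \emph{statistical error} and a \emph{coupling error}:
\[
  N^{\alpha}\left|\frac{1}{N}\sum_{i=1}^N F(X^{i,N},Y^{i,N}) - \E_\probinv[F(X,Y)]\right|
  \le S_N + C_N,
\]
where
\[
  S_N = N^{\alpha}\left|\frac{1}{N}\sum_{i=1}^N F(\bar X^{i},\bar Y^{i}) - \E_\probinv[F(X,Y)]\right|, \quad
  C_N = N^{\alpha}\cdot\frac{1}{N}\sum_{i=1}^N\left|F(\bar X^{i},\bar Y^{i}) - F(X^{i,N},Y^{i,N})\right|,
\]
(I omit the path subscripts $t\in[0,T]$ for readability). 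I will show both pieces tend to $0$ in probability for any $\alpha<1/2$.

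For the statistical error $S_N$, recall that by construction the paths $(\bar X^{i},\bar Y^{i})_{t\in[0,T]}$ are i.i.d.\ with law $\probinv$. Since $F$ is bounded by some constant $K$, the variance of the empirical mean is at most $K^2/N$, so Chebyshev's inequality (or equivalently the CLT) gives $\frac{1}{N}\sum_{i=1}^N F(\bar X^{i},\bar Y^{i}) - \E_\probinv[F(X,Y)] = O_\P(1/\sqrt N)$, hence $S_N = O_\P(N^{\alpha-1/2})\to 0$ in probability.

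For the coupling error $C_N$, the key observation is that on the event $\{\tau^i>T\}$ one has $\bar X^{i}_t = X^{i,N}_t$ for all $t\in[0,T]$, and then by strong uniqueness for the SDE driving $Y$ (applied with the common driver $W^i$ and the common jump process on $[0,T]$, as guaranteed by Hypothesis~\ref{hypo:continuous} via Proposition~\ref{unicite_eds_sauts}) we also get $\bar Y^{i}_t = Y^{i,N}_t$, so the paths coincide on $[0,T]$ and $F(\bar X^{i},\bar Y^{i}) = F(X^{i,N},Y^{i,N})$. Using $|F|\le K$, this yields
\[
  C_N \le \frac{2K}{N^{1-\alpha}}\sum_{i=1}^N \ind{\tau^i\le T}.
\]
Since the $\tau^i$ all have the same law as $\tau^1$ under $\P$, Proposition~\ref{prop_speed} gives $\E[C_N] \le 2K\cdot N^\alpha\,\P(\tau^1\le T)\le 2KK'\,N^{\alpha-1/2}\to 0$, so $C_N\to 0$ in $L^1$ and a fortiori in probability.

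The main subtlety is purely the coupling argument justifying $C_N\to 0$: one must be careful that on $\{\tau^i>T\}$ the $Y$ components also coincide, which is where strong pathwise uniqueness of the jump SDE (provided by Proposition~\ref{unicite_eds_sauts}) is really used. Everything else is a direct combination of the variance bound on $S_N$ and the $1/\sqrt N$ bound of Proposition~\ref{prop_speed} applied to each $\tau^i$.
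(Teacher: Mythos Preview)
Your proof is correct and follows essentially the same approach as the paper: decompose into a statistical term handled by the CLT (or, as you note, Chebyshev) on the i.i.d.\ $(\bar X^i,\bar Y^i)$, and a coupling term bounded by $\frac{2K}{N^{1-\alpha}}\sum_i\ind{\tau^i\le T}$ and controlled in $L^1$ via Proposition~\ref{prop_speed}. Your explicit remark that on $\{\tau^i>T\}$ the $Y$-components also coincide (by the pathwise construction/strong uniqueness) is a useful point the paper leaves implicit in its construction of $(\bar X^i,\bar Y^i)$ from the same $W^i$, $T^{i,k}$, $U^{i,k}$.
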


\begin{remark}
  To prove Proposition~\ref{prop_speed}, we have assumed that $\P(X_0=x)>0$ for any $x\in \cL_M$. In fact, the same proof would work if we assumed that there existed $x_0 \in \cL_M$ such that $\P(X_0\ge x_0)=1$ and $\P(X_0=x)>0$ for any $x\ge x_0$.

 However, in practice, it would have been nice to treat the case $\P(X_0=x_0)=1$ for some $x_0\in \cL_M$, since we know at the beginning how many firms have already defaulted. Heuristically from~\eqref{vitesse_5}, we may hope to have for large $N$ that $ \E\left[
    \frac{1}{  \bar{q}_t(\bar{X}^1_t)} \mathbf{1}_{\tau^2\le t} \right] \le C \P(\tau^1\le t) $ since $\bar{X}^1$ and $\tau^2$ are asymptotically independent, $ \E\left[
    \frac{1}{  \bar{q}_t(\bar{X}^1_t)} \right] \le M+1$, and $\tau^2$ and $\tau^1$ have the same law. This would be enough to conclude. Unfortunately, despite our investigations, we have not been able to prove this formally. However, we still observe a convergence speed of $1/\sqrt{N}$ when $X_0=0$ on our numerical experiments (Section~\ref{sec_num}).
\end{remark}

\section{Numerical results}\label{sec_num}

In this section, we illustrate the theoretical results obtained in the previous
sections. Let us recall that the local intensity (LI) model is a Markov chain with unit jumps occurring with the rate~$\lambda(t-,x)$ and that the SLI model is given by equation~\eqref{continuous}.
First, we highlight that the LI and SLI models have the
same marginal distributions but different laws as processes. Second, we study
the convergence of the interacting particle system and obtain numerical
simulations showing a central limit theorem.

For our numerical experiments we consider two different models for the process
$Y$ described below and we assume that the local intensity $\lambda$ and the
function $f$ are given by
\begin{align*}
  \lambda(t, x) & = \bar \lambda \left(1 - \frac{x}{M}\right) \\
  f(x) & = (x \vee \underline{f} ) \wedge \overline{f}
\end{align*}
where the number of defaultable entities $M$ will be taken equal to $M=125$
from now on.  This choice of $\lambda(t,x)$ corresponds to $M$ independent default times with intensity~$\frac{\bar \lambda }{M}$. We also assume that there is no default at the beginning, i.e. $X_0=0$.

\begin{enumerate}
  \item In the framework proposed by~\cite{Lopatin}, the process $Y$ is a
    continuous process satisfying a CIR type SDE
    \begin{equation}
      \label{eq:lopatin}
      dY_t = \kappa ( \lambda(t, X_{t-}) - Y_t) + \sigma \sqrt{Y_t} dW_t 
    \end{equation}
    where $\kappa, \sigma >0$. To sample such a process, we use the second-order
    discretization scheme for the CIR diffusion given in~\cite{AA_CIR}.

  \item In the framework of \cite{Arnsdorf}, the process $Y$ is no more
    continuous and may jump when a new default occurs. The process $Y$
    solves the following SDE
    \begin{equation}
      \label{eq:arnsdorf}
      dY_t = - a Y_t \log(Y_t)dt + \sigma Y_t dW_t + \gamma Y_{t^-} dX_t 
    \end{equation}
    where $a, \sigma >0$ and $\gamma \ge 0$. Remember that $X$ only has positive
    jumps. For discretization purposes, note that between two jump times of $X$, 
    $\log(Y)$ solves the following Ornstein Uhlenbeck SDE
    \begin{equation*}
      dZ_t = (- a Z_t + \frac{1}{2} \sigma^2 ) dt + \sigma dW_t.
    \end{equation*}
    Even though we could have sampled exactly in this particular case the Gaussian increments of~$Z$, we discretize $Y$ using the Euler scheme on~$Z$ in our simulation since we would make this choice for more general SDEs on~$Y$.
\end{enumerate}

The LI model can be simulated very easily using a standard Monte--Carlo approach
as it is sufficient to know how to sample a Poisson process with intensity
$\lambda$; we do not need any interacting particle system.

\subsection{Practical implementation}

In this part, we describe our implementation of the particle system to sample
from the distribution of $(X_t, Y_t)_{0 \le t \le T}$. We recall that the process $X$ has no continuous part and makes jumps of size
$1$. The process $Y$ has a continuous part and may jump at the same times as
$X$. We consider a regular time grid of $[0,T]$ with step size $h = \frac{T}{D}$: $s_k=kh$,
$0 \le k\le D$.  Assume we have already discretized $Y$ up
to time $s_k$, the discretization of $Y$ at time $s_{k+1}$ is built in the
following way:
\begin{itemize}
  \item If the process $X$ does not jump between time $s_k$ and time $s_{k+1}$ we
    use the increment of a standard discretization scheme.
  \item If the process $X$ jumps at time $s$ with $s_k < s < s_{k+1}$, we proceed
    in three steps: apply the previous case between times $s_k$ and $s$,
    integrate the jump at time $s$ and finally apply the previous case again
    between time $s$ and $s_{k+1}$.
\end{itemize}
This scheme ensures all the $Y^i$ are at least discretized on the regular grid
$\{ s_0, s_1, \dots,  s_D\}$.

\begin{algorithm*}[h]
  \caption{(The naive particle system algorithm with complexity $O(DN + N^2)$.)}\label{algodetail}
  \begin{algorithmic}[1]
    \State $t=0$ \Comment{Current date.}
    \State $t_i = 0$ for all $i=1\dots N$ \Comment{Last discretization date for
    the particle $i$.}
    \State Sample  $(X^i,Y^i)$ independently according to the initial law. 
    \State $s_k = 0$ \Comment{ Last date on the regular grid: $s_k\le t<s_{k+1}$.}
    \While{$t \le T$}\label{loop_outer}
      \State $t' =t+ {\mathcal E} \left( \frac{\bar \lambda \bar f}{\underline f} N \right)$
      \Comment{$t'$ is the potential next jump in the whole particle system.}
      \While{$t' > s_k + h$} \label{loop_time}
        \State $s_k = s_k+h$
        \For{$i=1$ {\bf to} $N$}
          \State Update the discretization of $Y^i$ from time $t_i$ to time 
          $s_k$.
          \State $t_i = s_k$
        \EndFor
        \State $t=s_k$
      \EndWhile
      \State $I =$ uniform r.v. with values in $\{1, \dots, N\}$. \Comment{Index
      of the particle which may jump.}
      \State Compute the conditional expectation
      \begin{align}
        \label{eq:expect_cond}
        E = \frac{\sum_{l=1}^N \ind{X^{l}=X^{I}} }
        {\sum_{l=1}^N f(Y^{l})\ind{X^{l}=X^{I}}}.
      \end{align}

      \State $R = \frac{\underline f}{\bar \lambda  \bar f}\lambda(t',X^{I})f(Y^{I}) E$ 
        \Comment{Compute the acceptance ratio.}
      \State $U =$ uniform r.v. in $[0,1]$.
      \If{$U < R$}
        \Comment{We accept the jump.}
        \State Discretize $Y^I$ up to time $t'$.
        \State $Y^I=Y^I+\gamma(t',X^I,Y^I)$ 
        \State $X^I = X^I+1$ 
        \State $t_I = t'$
      \EndIf
      \State $t=t'$
    \EndWhile
  \end{algorithmic}
\end{algorithm*}

\paragraph{Computational complexity.} Studying the complexity is of prime
importance when proposing a numerical algorithm.

On the one hand, there are $D$ discretization times at which we recalculate the
values of each $Y^i$, which requires $O(DN)$ operations. On the other hand, the
average total number of proposed jump dates (ie. the number of steps in
loop~\ref{loop_outer}) is given by the expectation of the underlying Poisson
process at time $T$ : $N T \frac{\bar \lambda  \bar f}{\underline f}$.  The
computation cost of the ratio~\eqref{eq:expect_cond}  is $O(N)$ and the
complexity is $O(ND + N^2 T \frac{\bar \lambda  \bar f}{\underline f})$. Hence,
for fixed model parameters, the overall complexity of our approach is bounded
by $O(D N +  N^2)$. In practice, $N$ is much larger than $D$, and the most
computationally demanding part of the algorithm is the numerical approximation
of the condition expectation involved in the jump intensity. 

The complexity of the interacting particle approach can be well improved if
during the algorithm we keep track of the following two quantities involved in
Equation~\eqref{eq:expect_cond}
\begin{align}
  \label{eq:DNk}
  D_t^j  = \sum_{l=1}^N f(Y^{l}_{t-})\ind{X^{l}_{t-}=j} \quad \text{and} \quad
  N_t^j = \sum_{l=1}^N \ind{X^{l}_{t-}=j} \quad j=0, \dots,M.
\end{align}
Since the processes $X^{l}$ are unit-jump increasing these quantities clearly vanish for $j>\max_l X^l_{t-}$. 
\begin{algorithm*}[h!b]
  \caption{(The improved particle system algorithm with complexity $O(DN)$.)}\label{algodetail2}
  \begin{algorithmic}[1]
    \State $t=0$
    \State $t_i = 0$ for all $i=1\dots N$ \Comment{Last discretization date for
    the particle $i$.}
    \State Sample  $(X^i,Y^i)$ independently according to the initial law. 
    \State Set  $D^l=N^l=0$ for $0\le l \le M$.
    \For{$i=1$ {\bf to} $N$} \Comment {Calculate $D$ and $N$. We can directly set  $N^0 = N$, $D^0 = N f(Y_0)$, $D^l=N^l=0$ for $1\le l \le M$ when the initial law is a Dirac mass at $(0,Y_0)$.}
    \State $N^{X^i}=N^{X^i}+1$, $D^{X^i}=D^{X^i}+f(Y^i)$
    \EndFor 
    \State $s_k = 0$ \Comment {Last date on the regular grid.}
    \While{$t \le T$}
      \State $t' =t+ {\mathcal E} \left( \frac{\bar \lambda \bar f}{\underline f} N \right)$
      \Comment{$t'$ is the potential next jump in the whole particle system.}
      \While{$t' > s_k + h$}
        \State $s_k = s_k+h$
        \For{$i=1$ {\bf to} $N$}
          \State Update the discretization of $Y^i$ from time $t_i$ to time 
          $s_k$.
          \State $t_i = s_k$
        \EndFor
        \State $t=s_k$
        \State Reinitialize $D^l=N^l=0$ for $0\le l \le M$.
        \For{$i=1$ {\bf to} $N$} \Comment {Recalculate D and N.}
          \State $N^{X^i}=N^{X^i}+1$, $D^{X^i}=D^{X^i}+f(Y^i)$
        \EndFor
      \EndWhile
      \State $I =$ uniform r.v. with values in $\{1, \dots, N\}$ \Comment{Index
      of the particle which may jump.}     
      \State $R = \frac{\underline f}{\bar \lambda  \bar f}\lambda(t',X^{I})f(Y^{I}) 
      \frac{N^{X^I}}{D^{X^I}}$ 
        \Comment{Compute the acceptance ratio.}
      \State $U =$ uniform r.v. in $[0,1]$
      \If{$U < R$}
        \Comment{We accept the jump.}       
        \State $N^{X^I}=N^{X^I}-1$, $D^{X^I}=D^{X^I}-f(Y^I)$
        \State Discretize $Y^I$ up to time $t'$.
        \State $Y^I=Y^I+\gamma(t',X^I,Y^I)$ 
        \State $X^I = X^I+1$
        \State $N^{X^I}=N^{X^I}+1$, $D^{X^I}=D^{X^I}+f(Y^I)$
        \State $t_I = t'$
      \EndIf
      \State $t=t'$
    \EndWhile
  \end{algorithmic}
\end{algorithm*}

Let $t$ be the last proposed jump time of the particle system. These two
vectors can be easily updated at time $t'$ which denotes the next possible jump
time.  If some ticks of the regular grid lie in $[t, t')$, we set $t$ as the
last discretization date in this interval and recompute vectors $(D_t^j)_j$ and
$(N_t^j)_j$ using Equation~\eqref{eq:DNk}. This happens $D$ times in the
algorithm and can be done with $O(M+N)$ operations as explained in
Algorithm~\ref{algodetail2}.
\begin{description}
  \item[Case 1:] If the proposed jump at time $t'$ is not accepted, there is
    nothing to compute:
    \begin{align*}
      D_{t'}^{j} & =  D_{t}^{j} \quad \text{and} 
      \quad N_{t'}^{j} =  N_{t}^{j}. 
    \end{align*}
  \item[Case 2:] Otherwise, let $I$ denote the index of the particle jumping at
    time $t'$, we use
    \begin{equation}
      \label{eq:updateDNk}
      \begin{cases}
        D_{t'}^{j}  = D_{t}^{j} -f(Y^{I}_{t}); \quad 
        D_{t'}^{j+1}  =  D_{t}^{j+1} + f(Y^{I}_{t'}) &\mbox{ if }
        j=X_{t}^{I}, \\
        N_{t'}^{j}  = N_{t}^{j} - 1; \quad 
        N_{t'}^{j+1}  =  N_{t}^{j+1} + 1 &\mbox{ if }
        j=X_{t}^{I}, \\
        N_{t'}^{j}  =  N_{t}^{j}; \quad D_{t'}^{j}  =  D_{t}^{j}
        &\mbox{ otherwise.}
    \end{cases}
  \end{equation}
\end{description}
One should notice that in cases 2 and 3, the updating cost does not depend on
the size of the vectors  $(D_t^j)_j$ and  $(N_t^j)_j$.  Using these updating
formulas, we can improve Algorithm~\ref{algodetail} to obtain
Algorithm~\ref{algodetail2}.  With this new algorithm we only have to compute
$D$ full approximations of the conditional expectation which has a unit a cost
of $O(M+N)$ and the rest of the time we use the updating
formulas~\eqref{eq:updateDNk}, which happens on average less than $N T
\frac{\bar \lambda  \bar f}{\underline f}$ times. Then, the overall cost of
this new algorithm is $O( D(M+N) + N T \frac{\bar \lambda  \bar f}{\underline
f} )$. For fixed model parameters, this complexity reduces to $O(DN)$. This new
algorithm has a linear cost with respect to the number of particles. Thus, we
managed to propose an interacting particle algorithm with the same cost as a
crude Monte--Carlo method for SDEs since $M$ is in practice fixed and much
smaller than~$N$. The CPU times of the two algorithms are compared in the
following examples.

\subsection{Marginal distributions}

%
\begin{figure}[htp]
  \begin{center}
    \subfigure[Default distribution of the SLI model for $Y$ given by 
    Equation~\eqref{eq:arnsdorf} with $T=1, Y_0=1, a=1, \sigma=0.3,
    \gamma=1, \bar \lambda=2.5$.\label{fig:marginal_arnsdorf}]
    {\includegraphics[width=0.47\textwidth]{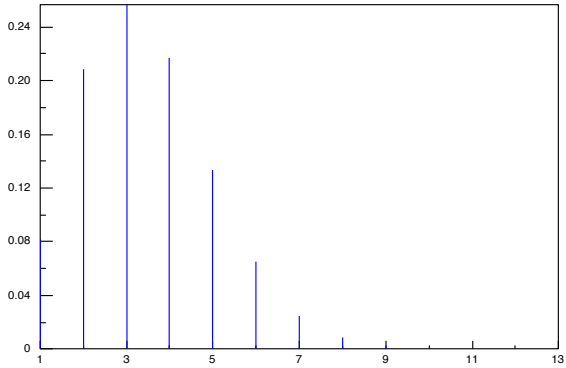}}
    \hfill
    \subfigure[Default distribution of the SLI model for $Y$ given by 
    Equation~\eqref{eq:lopatin} with $T=1, Y_0=1, \kappa=1, \sigma=0.3,
    \bar \lambda=2.5$.\label{fig:marginal_lopatin}]
    {\includegraphics[width=0.47\textwidth]{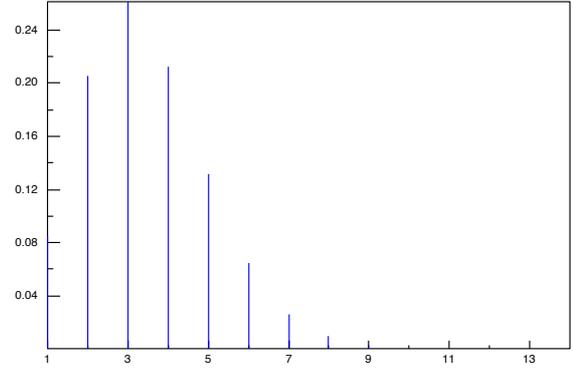}}
    \subfigure[Default distribution of the LI model for 
    $T=1, \bar \lambda=2.5$.]
    {\includegraphics[width=0.47\textwidth]{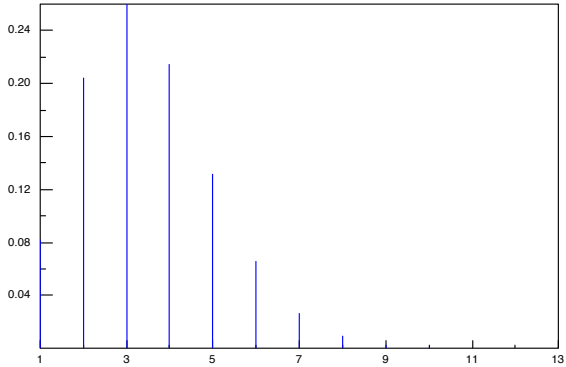}}
    \hfill
    \subfigure[Binomial distribution function with parameters $M$, $p = 1 -
    \expp{- \bar \lambda T / M}$.\label{fig:binomial}]
    {\includegraphics[width=0.47\textwidth]{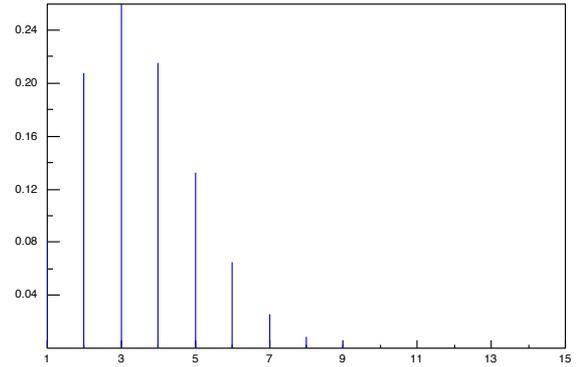}}
  \caption{Comparison of the marginal distributions of the LI and SLI models.
  The simulations use $N=50,000$ samples, $D=100$ and $\underline{f}=1/3, \overline{f}=3$.}
  \label{fig:marginal}
  \end{center}
\end{figure}

In Figures~\ref{fig:marginal}, we draw the probability distribution of $X$ for
the LI and SLI models for both processes $Y$ considered in this part. The
comparison of these graphs highlights how the LI and SLI models effectively
mimic their marginal distributions; their probability distributions look almost
the same. Since the LI model corresponds to independent default times with
intensity~$\frac{\bar \lambda}{M}$, the default distribution at time $T$ is actually the Binomial
law with parameters $M$ and $p=1-\exp\left(-\frac{\bar \lambda}{M}T\right)$ as
we can see on Figure~\ref{fig:binomial}.

\begin{table}[htp]
  \centering
  \begin{tabular}{l|c|c}
    & Algorithm~\ref{algodetail} & Algorithm~\ref{algodetail2}  \\
    \hline
    Model of Fig.~\ref{fig:marginal_arnsdorf} & 239 & 1.51 \\
    \hline
    Model of Fig.~\ref{fig:marginal_lopatin} & 229 & 1.49
  \end{tabular}
  \caption{Comparison of the CPU times (in seconds) of the two algorithms with the same parameters as in Figure~\ref{fig:marginal}.}
  \label{tab:CPU}
\end{table}
We compare in Table~\ref{tab:CPU} the computational times of the two algorithms
and the gain obtained by the second approach is definitely outstanding.
Algorithm~\ref{algodetail2} massively outperforms Algorithm~\ref{algodetail} by
a factor of $150$. Of course, this gain will be all the more important as the number of
particles increases.

Given the impressive match of the marginal distributions, we would like to
numerically investigate the difference between their distributions as processes.
To do so, we have computed in each model the length of the longest
interval during which $X$ does not jump defined by
\begin{align}
  \label{eq:max_constant}
  \tau = \sup \{ t \in [0, T] : \; \exists u \in [0, T-t], \; X_{{u+t}^-} = X_u \} 
\end{align}
Note that with this definition, $\tau = T$ when $X$ does not jump on the
interval $[0,T]$.  The histograms of $\tau$ in the LI and SLI models are shown
in Figure~\ref{fig:max_constant}; we can see that, in the SLI, the
length of the longest interval without jumps for $X$ can be very small with a
probability much higher than in the LI model (the l.h.s. of the histogram in the
SLI model is fatter than in the LI model). This impression is reinforced by more
quantitative observations. From the data used to plot these histograms, we have
computed in Table~\ref{tab:quantiles} several values of the cumulative distribution
function of the length of the longest interval without defaults both in the LI
and SLI models.  These quantities differ sufficiently to be numerically convinced
that these two distributions do not match.
\begin{table}[h!]
  \centering\begin{tabular}{l|c|c}
    & $\P(\tau \le T/4)$ & $\P(\tau \le T/8)$ \\
    \hline
    SLI model & 0.1911 ($\pm$ 0.0033) & 0.0200 ($\pm$ 0.0012)\\
    \hline
    LI model & 0.1645 ($\pm$ 0.0033) & 0.0113 ($\pm$ 0.0009) \\
  \end{tabular}
  \caption{Some values of the distribution function of the length of the longest
  interval without jumps for $T=2$ and $\bar \lambda=2.5$. The SLI model
  is defined as in Figure~\ref{fig:max_constant_sli}. The simulations use
  $50,000$ samples. The values between braces correspond to twice the standard
  deviation of the estimator. } \label{tab:quantiles}
\end{table}

\begin{figure}[h!tp]
  \begin{center}
    \subfigure[SLI model for $Y$ given by 
    Equation~\eqref{eq:arnsdorf} with $Y_0=1, a=1, \sigma=0.3,
    \gamma=3$.]
    {\includegraphics[width=0.48\textwidth]{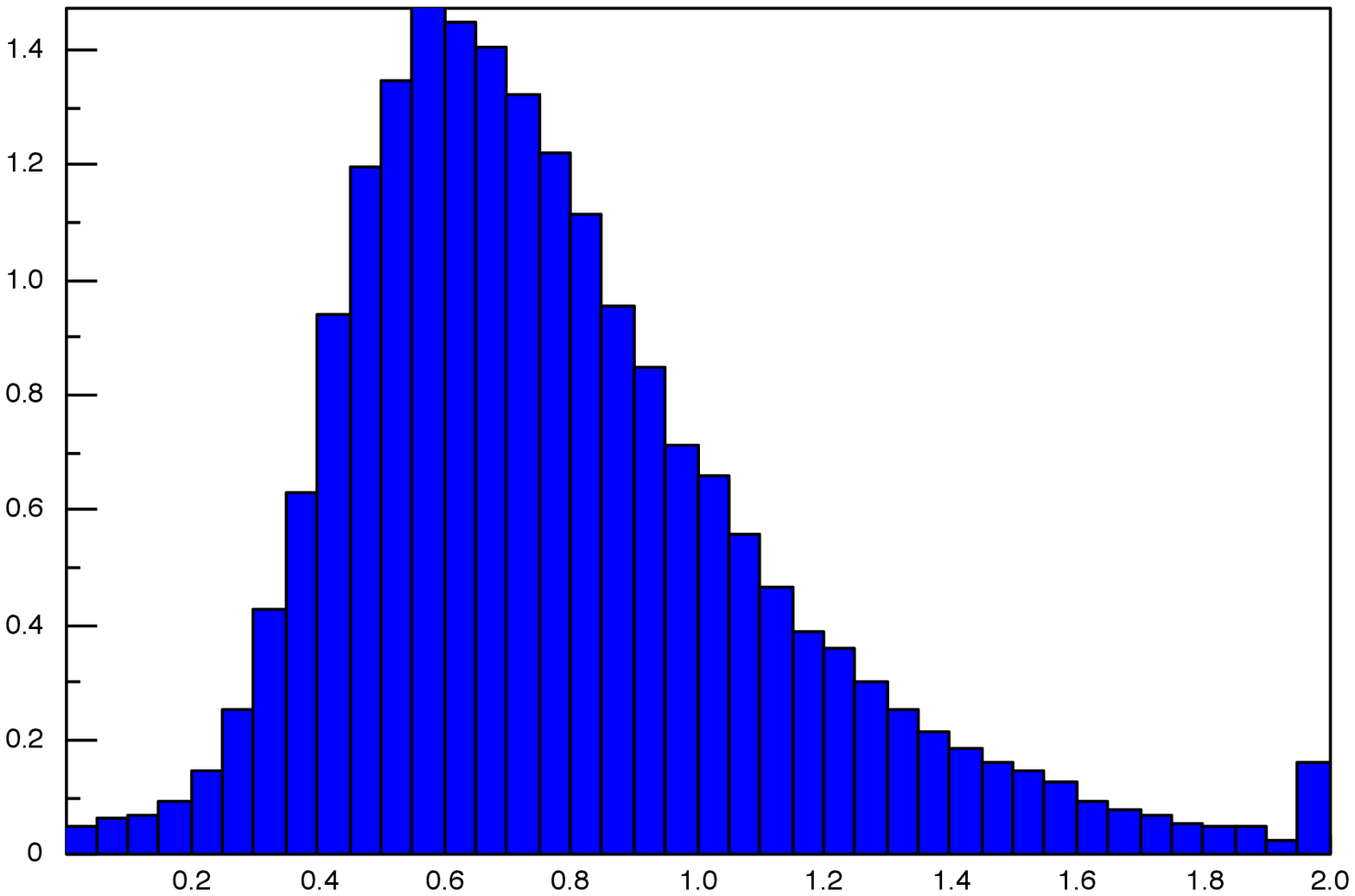}
    \label{fig:max_constant_sli}}
    \hfill
    \subfigure[LI model]{\includegraphics[width=0.495\textwidth]{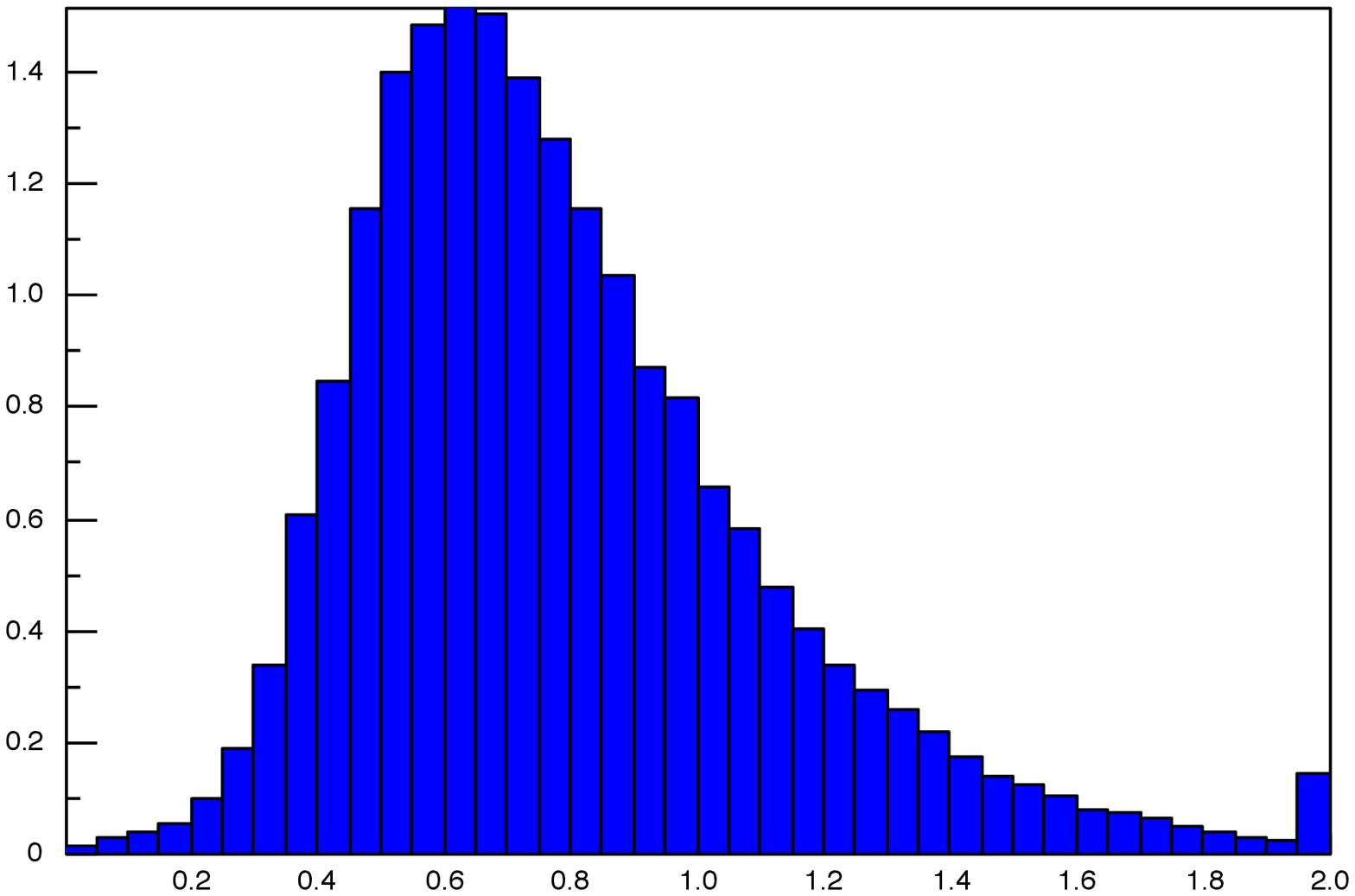}}
    \caption{Histogram of the length of the longest interval without $X$ jumping
    for $T=2$ and $\bar \lambda=2.5$. These histograms uses $50,000$ samples.}
    \label{fig:max_constant}
  \end{center}
\end{figure}

\subsection{Convergence of the interacting particle system}

When introducing the interacting particle system, we emphasized that it was not
only a theoretical tool but that it was also of practical interest as it satisfies
a strong law of large numbers. From a numerical point of view, the efficiency of
the particle system depends on the rate at which every particle converges to the
invariant probability (see Section~\ref{section_speed}). In that section, we
proved that this convergence rate was faster than $N^\alpha$ for $0 < \alpha <
1/2$ where $N$ is the number of particles. Now, we want to study this
convergence rate in several examples: the first example only involves the
marginal distribution of the particle system at maturity time, whereas the 
other two examples require the knowledge of the whole distribution
and not only the marginal ones.

\paragraph{Number of defaults distribution.}
First, we start with a simple example. We want to study the convergence of the
estimator of $\P(X_T = 3)$ computed on the particle system. We ran $5000$
independent copies of the interacting particle systems and we computed the value
of the estimator for each system. In Figure~\ref{fig:3default}, we can
see the centered and renormalised histogram of the values obtained for the
empirical estimators. The histogram can be compared to the density of the standard
normal distribution plotted as a solid line on Figure~\ref{fig:3default}
and they match pretty well. This result suggests that a kind of central limit theorem
should hold in practice even though we did not manage to prove such a result. \\

\begin{figure}[h!tp]
  \begin{center}
    \includegraphics[width=0.8\textwidth]{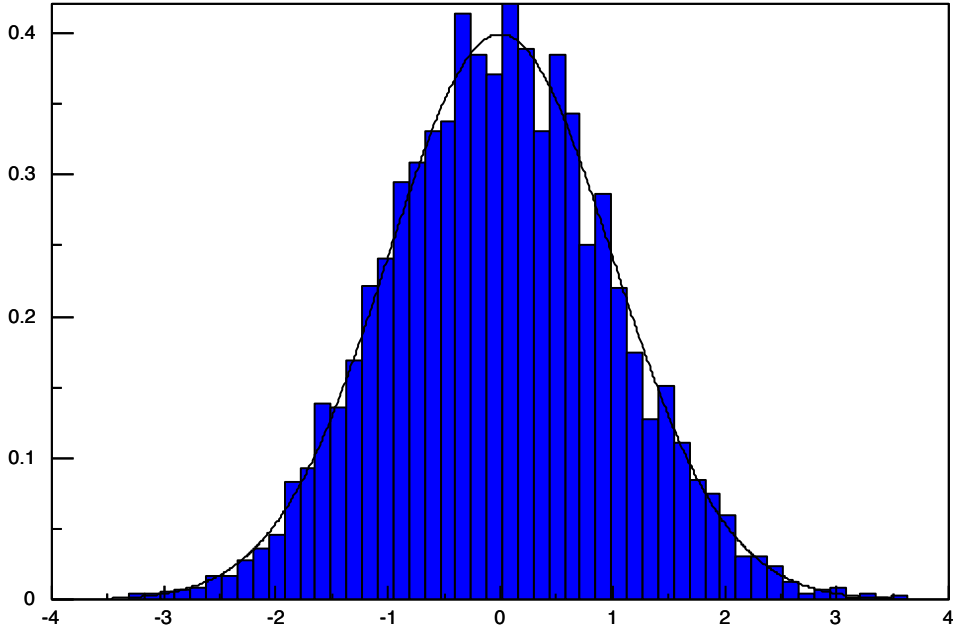}
    \caption{Centered and renormalised distribution of the estimation of $\P(X_T
    = 3)$ using the particle system when $Y$ is given by
    Equation~\eqref{eq:arnsdorf} with $T=1, Y_0=1, a=1, \sigma=0.3,
    \gamma=1, \bar \lambda=2.5$, $\underline{f}=1/3, \overline{f}=3$ and
    $N=10000$. The histogram was obtained using $5000$ independent particle
    systems.} \label{fig:3default}
  \end{center}
\end{figure}

\paragraph{Asian option on the number of defaults.}
For our second example, we consider an Asian option on the default counting
process whose price is given by 
\[
P = \E\left(\frac{1}{T} \int_0^T X_u du - K\right)_+ 
\]
This price $P$ will be approximated using the corresponding particle system
estimate $P_N$, where $N$ is the number of particles.  We are interested in the
limiting distribution of $P_N$. Because the process $X$ has no continuous part
and only makes jumps of size $1$ and $X_0 = 0$, 
the pathwise integral can be rewritten
\[
\frac{1}{T} \int_0^T X_u du = X_T - \frac{1}{T} \sum_{t \le T \; s.t. \; X_{t-} \neq X_t} t 
\]
Hence, there is no need to approximate the integral, it can be computed exactly
(up to the simulation of $X$). The example requires to sample the joint
distribution of $X_T$ and the sum of the default times.

On Figure~\ref{fig:asian_hist}, we have plotted the distribution of $P_N$ after
renormalizing and centering. As before, the solid line is the standard normal
density. We can see that the limiting distribution looks very much like the
Gaussian distribution, but such an histogram does not enable to determine the
rate of convergence to the limiting distribution. Actually, the rate of
convergence is given by the decrease rate of $\sqrt{\Var(P_N)} = \left(\E(|P_N -
P|^2)\right)^{1/2}$. From a practical point of view we do not have access to
$P$, so we have approximated it by the empirical mean $\hat P$ of the
data set used to build the histogram of Figure~\ref{fig:asian_hist}.

We can see on Figure~\ref{fig:asian_cv_rate} that the rate of decrease of
$\sqrt{\E(|P_N - \hat P|^2)}$ recalls the shape of a negative power function.
Then, we have decided to compute the linear regression of $-\frac{1}{2} \log
\E(|P_N - \hat{P}|^2)$ with respect to $\log(N)$ on our simulations of $P_N$ for $N$
varying from $100$ to $10,000$ with a step size of $100$, which gives a set of
$100$ data. The idea of the regression is to write
\[
- \frac{1}{2} \log \E(|P_N - \hat{P}|^2) = \alpha \log(N) + \beta + \varepsilon_N
\]
and to minimize the series $\sum_{N} \varepsilon_N^2$.  The minimum is achieved
for $\alpha = 0.5014$, $\beta=-0.1361$ and the empirical variance of the
sequence $(\varepsilon_n)$ is equal to $10^{-4}$. This computation yields that
the rate of convergence to the limiting distribution is $\sqrt{N}$. It ensues
from this result combined with the analysis of the
histogram~\ref{fig:asian_hist} that a Central Limit Theorem with rate
$\sqrt{N}$ should hold.

\begin{figure}[h!tp]
  \begin{center}
    \includegraphics[width=0.8\textwidth]{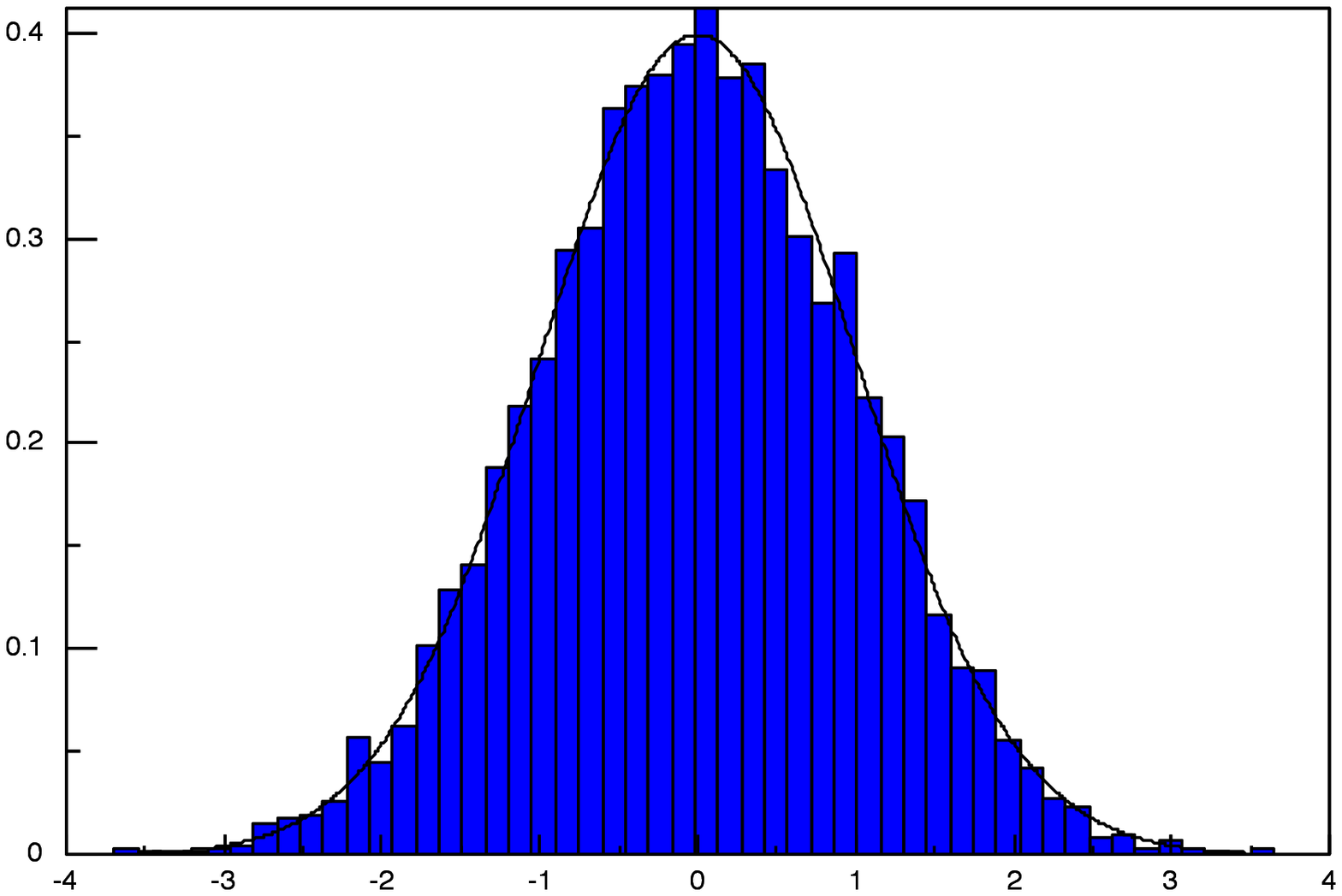}
    \caption{Centered and renormalised distribution of the estimator $P_N$ of
    the Asian option price using the particle system when $Y$ is given by
    Equation~\eqref{eq:lopatin} with $T=2, Y_0=1, a=1, \sigma=0.3, \kappa=1$,
    $\underline{f}=1/3, \overline{f}=3$ and $N=10000$. The histogram was
    obtained using $10,000$ independent particle systems.} \label{fig:asian_hist}
  \end{center}
\end{figure}

\begin{figure}[h!tp]
  \begin{center}
    \includegraphics[width=0.8\textwidth]{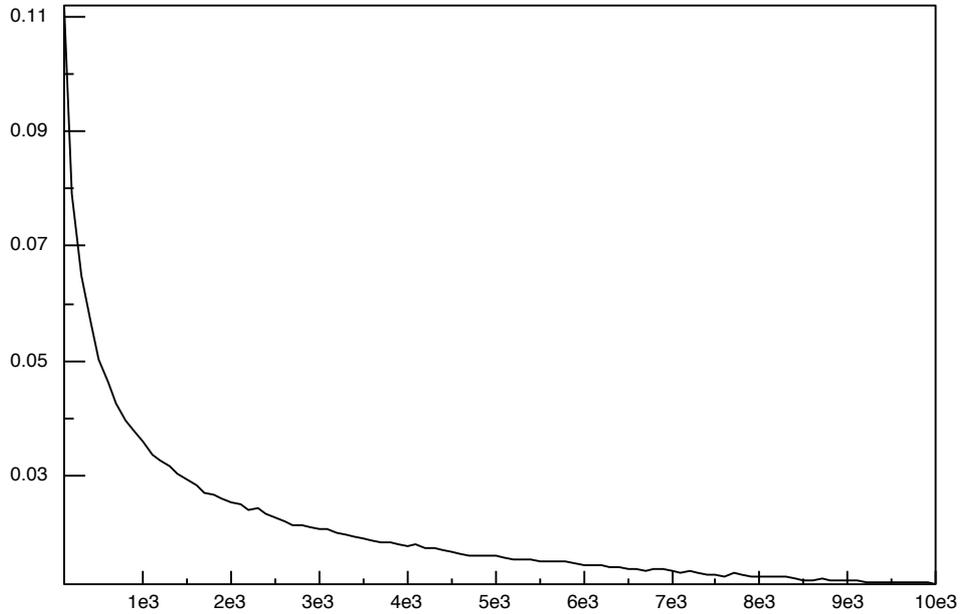} 
    \caption{Convergence rate of $\sqrt{\E|P_N - \hat P|^2}$ w.r.t the number of
    particles $N$.}
    \label{fig:asian_cv_rate}
  \end{center}
\end{figure}

\paragraph{Longest interval without jump.} 

In this paragraph, we are interested in the convergence rate of the estimator of
the length of the longest time interval with no jump. We recall the quantity of
interest already defined in Equation~\eqref{eq:max_constant}
\begin{align*}
  \tau = \sup \{ t \in [0, T] : \; \exists u \in [0, T-t], \; X_{{u+t}^-} = X_u \} 
\end{align*}
and we consider its particle system estimator $\tau_N$. To numerically
sample from the distribution of $\tau$, we need to know the joint distribution
of the jumping times of $X$, which is a prime case of a pathwise estimator.

On Figure~\ref{fig:max_constant_hist}, we can see  the centered and renormalized
distribution of $\tau_N$ together with the standard normal density function
plotted as a solid line.  Again, the limiting distribution looks very much like
a Gaussian distribution. Using these $5,000$ independent particle systems, each
with $10,000$ particles, we can compute an approximation of $\tau$, denote
$\hat \tau$ in the following. Now, we can run independent simulations of
particle systems with a number of particles $N$ varying from $100$ to $10,000$.
We study the rate of decrease of $\sqrt{\E|\tau_N - \hat \tau|^2}$, which
according to Figure~\ref{fig:max_constant_cv_rate} shows a negative power
function shape. If we linearly regress $\log \sqrt{\E|\tau_N - \hat \tau|^2}$
against $\log(N)$ we find that we can write
\[
- \frac{1}{2} \log \E(|\tau_N - \hat{\tau}|^2) = \alpha \log(N) + \beta + \varepsilon_N
\]
The linear regression yields $\alpha = 0.4865$, $\beta=1.016$ and the empirical
variance of the sequence $(\varepsilon_n)$ is equal to $0.0004$. This regression
yields that the rate of convergence to the limiting distribution is $\sqrt{N}$,
which focuses the existence of a Central Limit Theorem with rate $\sqrt{N}$.

\begin{figure}[h!tp]
  \begin{center}
    \includegraphics[width=0.8\textwidth]{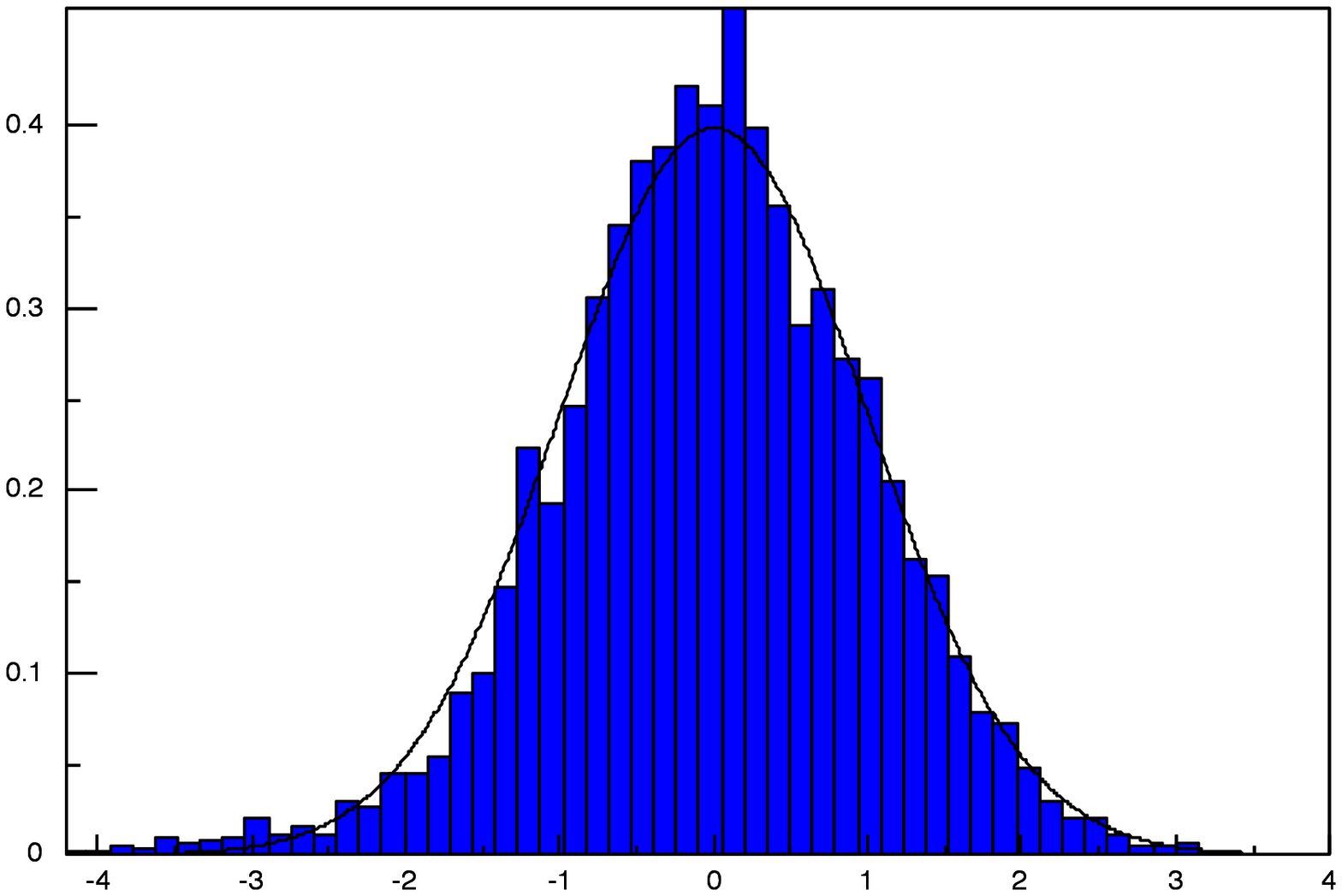}
    \caption{Centered and renormalised distribution of the estimator $\hat \tau$
    of $\tau$ using the particle system when $Y$ is given by
    Equation~\eqref{eq:lopatin} with $T=2, Y_0=1, a=1, \sigma=0.3, \kappa=1$,
    $\underline{f}=1/3, \overline{f}=3$ and $N=10,000$. The histogram was
    obtained using $10,000$ independent particle systems.}
    \label{fig:max_constant_hist}
  \end{center}
\end{figure}

\begin{figure}[h!tp]
  \begin{center}
    \includegraphics[width=0.8\textwidth]{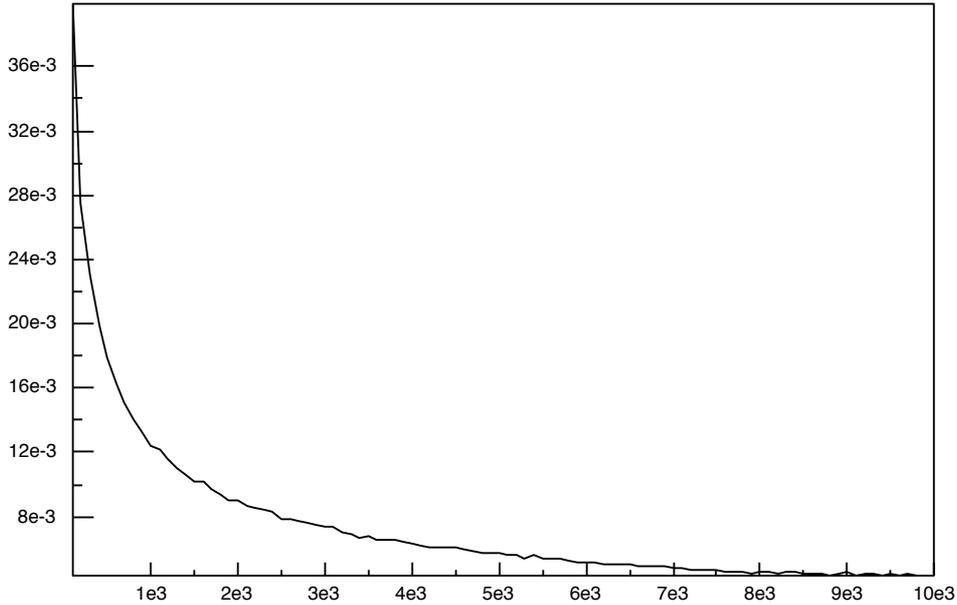} 
    \caption{Convergence rate of $\sqrt{\E|\tau_N - \hat \tau|^2}$ w.r.t the number of
    particles $N$.}
    \label{fig:max_constant_cv_rate}
  \end{center}
\end{figure}

\section{Conclusion}

Local intensity models are wide spread for modelling the default counting
process in credit risk. Recently, more sophisticated models with a stochastic
factor involved in the intensity have been introduced in the literature. These
stochastic local intensity models can be automatically calibrated to CDO
tranche prices by properly choosing the local part of the intensity. This
particular choice of the local intensity gives rise to a very specific family
of SLI dynamics for which we have investigated the existence and uniqueness of
solutions. This theoretical study has been carried out using particle systems,
which turned out to be a clever tool for the numerical simulation of such
dynamics. We have proved that particle Monte-Carlo algorithms based on this
particle system approach almost surely converge. The theoretical study of the
convergence rate enabled us to prove that the almost sure convergence took
place at a rate faster that $N^\alpha$ for any $0 < \alpha < 1/2$. Obtaining a
Central Limit Theorem type result for such particle systems remains an open
question, even though we could highlight such a behaviour in all our
simulations. Last, we have shown that the interacting particle system can be
sampled with a computational cost in $O(ND)$, which is the same asymptotic cost as
a Monte-Carlo algorithm for standard SDEs. 

\clearpage
\appendix
\section{Proof of Proposition~\ref{prop_FP}}\label{App_prop_FP}
The scheme of the proof is the following:
  \begin{itemize}
  \item First, we prove that $\Psi_+:t,x\longmapsto \Psi(t,x_+)$ is globally
    Lipschitz in $x$. Then, $(\cE'')$ admits a
    unique solution on $\R_+$.
  \item Second, we prove that the solution satisfies
    $\forall t \ge 0, \; P(t) \ge 0$ and $|P(t)|=1$.
  \end{itemize}

{\bf Step 1: $\Psi_+$ is globally Lipschitz.}
    Let us prove that for all $(x,y) \in E \times E$ there
  exists a constant $K$ such that $|\Psi_+(t,x)-\Psi_+(t,y)|\le K|x-y|$. We
  have
  \begin{align*} |\Psi_+(t,x)-\Psi_+(t,y)|=\sum_{i \in \cL_M} \sum_{j \ge 0}
    |(\Psi(t,x_+))^i_j-(\Psi(t,y_+))_j^i|.
  \end{align*} Bounding this
  quantity boils down to  bound $\displaystyle \sum_{i \in \cL_M} \sum_{j \ge
    0} \left((A_1)^i_j + 2
  f(j)\lambda(t,i)(A_2)^i_j\right)$ where
  \begin{align*}
    & (A_1)^i_j:=|\sum_{k \ge 0} \mu^i_{kj}
    ((x^i_k)_+-(y^i_k)_+)|,\\
    &(A_2)^i_j:=\left|\frac{\sum_{l=0}^{\infty}(x^{i}_l)_+}{\sum_{l=0}^{\infty}f(l)(x^{i}_l)_+}(x^i_j)_+-\frac{\sum_{l=0}^{\infty}(y^{i}_l)_+}{\sum_{l=0}^{\infty}f(l)(y^{i}_l)_+}(y^i_j)_+\right|.\\
  \end{align*}
  
  {\bf Bound for $A_1$.} We easily get from Hypothesis~\ref{hypo:discrete}
  \begin{align*}
   \sum_{i \in \cL_M} \sum_{j \ge
    0}  (A_1)^i_j \le \sum_{i \in \cL_M} \sum_{k \ge 0} \sum_{j \ge 0}
    |\mu^i_{kj}| |(x^i_k)_+-(y^i_k)_+| &= \sum_{i \in \cL_M} \sum_{k \ge 0} 2
    |\mu^i_{kk}| |(x^i_k)_+-(y^i_k)_+| \\&\le 2 \sup_{i\in \cL_M ,k \in \N}
  |\mu^i_{kk}|  |x-y|.
\end{align*}

{\bf Bound for $A_2$.} To
bound it, we introduce
  $\pm \frac{\sum_{l=0}^{\infty}(y^i_l)_+}{\sum_{l=0}^{\infty}f(l)(x^i_l)_+}(x^i_j)_+$
  and $\pm
  \frac{\sum_{l=0}^{\infty}(y^i_l)_+}{\sum_{l=0}^{\infty}f(l)(y^i_l)_+}(x^i_j)_+$
  in order to split $(A_2)^i_j$ in three terms. Each of them is bounded in the
  following way
  \begin{align*}
    \left|\frac{\sum_{l=0}^{\infty}(x^{i}_l)_+}{\sum_{l=0}^{\infty}f(l)(x^{i}_l)_+}(x^i_j)_+-\frac{\sum_{l=0}^{\infty}(y^i_l)_+}{\sum_{l=0}^{\infty}f(l)(x^i_l)_+}(x^i_j)_+\right|&\le
    \frac{(x^i_j)_+}{\sum_{l=0}^{\infty}f(l)(x^{i}_l)_+}\sum_{l=0}^{\infty}|(x^{i}_l)_+-(y^i_l)_+|,\\
   \left|\frac{\sum_{l=0}^{\infty}(y^i_l)_+}{\sum_{l=0}^{\infty}f(l)(x^i_l)_+}(x^i_j)_+-\frac{\sum_{l=0}^{\infty}(y^i_l)_+}{\sum_{l=0}^{\infty}f(l)(y^i_l)_+}(x^i_j)_+\right|&\le
   \frac{\overline{f}}{\underline{f}}\frac{(x^i_j)_+}{\sum_{l=0}^{\infty}f(l)(x^{i}_l)_+}\sum_{l=0}^{\infty}|(x^{i}_l)_+-(y^i_l)_+|,\\
   \left|\frac{\sum_{l=0}^{\infty}(y^i_l)_+}{\sum_{l=0}^{\infty}f(l)(y^i_l)_+}(x^i_j)_+-
     \frac{\sum_{l=0}^{\infty}(y^i_l)_+}{\sum_{l=0}^{\infty}f(l)(y^i_l)_+}(y^i_j)_+\right|&\le \frac{1}{\underline{f}}|(x^{i}_j)_+-(y^i_j)_+|.
 \end{align*}
 Then, $\sum_{i \in \cL_M} \sum_{j \ge 0} f(j) \lambda(t,i)(A_2)^i_j \le \overline{\lambda}(1+2\frac{\overline{f}}{\underline{f}})|x-y|$.
 Combining bounds on $A_1$ and $A_2$, we get
 $|\Psi_+(t,x)-\Psi_+(t,y)| \le K |x-y|$, where $\displaystyle K=2 \sup_{i\in \cL_M ,k \in \N}
  |\mu^i_{kk}| +2\overline{\lambda}\left(1+2\frac{\overline{f}}{\underline{f}}\right)$.\\

 {\bf Step 2: the solution is positive with norm $1$.}  Let $P(t)$ denote the unique
 solution of $(\mathcal{E}'')$.\\
 First, we prove that $\forall t \in \R_+, \forall (i,j) \in
 \mathcal{L}_M\times \N$, $P^i_j(t) \ge 0$.\\
 $P^i_j(t)$ satisfies
 $$
 \left\{\begin{array}{l}
     (P^i_j)'(t)=\sum_{k\neq j} \mu^i_{kj}
     (P^i_k(t))_+
     +\ind{i\ge
       1}\frac{\lambda(t,i-1)f(j)}{\varphi((P(t))_+,i-1)}(P^{i-1}_j(t))_+
     -\left(\frac{\lambda(t,i)f(j)\ind{i \le M-1}}{\varphi((P(t))_+,i)}-\mu^i_{jj}\right)(P^i_j(t))_+,\\
      P^i_j(0)=\delta_{ix_0}\delta_{jy_0}.
     \end{array}\right.
 $$
 We assume that there exists $\overline{t} \ge 0$ such that
   $P^i_j(\overline{t})<0$. We also introduce $u:=\sup\{s \le \ovt :
   P^i_j(s)=0\}$. Then, we integrate the above equation between $u$ and
   $\ovt$. We get
   \begin{align*}
     (P^i_j)(\ovt)=\int_u^{\ovt} \sum_{k\neq j} \mu^i_{kj}
     (P^i_k(s))_+ +\ind{i\ge
       1}\frac{\lambda(s,i-1)f(j)}{\varphi((P(s))_+,i-1)}(P^{i-1}_j(s))_+
     ds.
   \end{align*}
   The l.h.s. is strictly negative whereas the r.h.s. is non negative. Then,
   $P^i_j(t) \ge 0$ for all $t \in \R_+$.\\
 Second, we prove  $\forall t \in \R_+$, $ |P(t)|=1$. Since $P$ is
 non negative, $(|P(t)|)'=\sum_{i\in \cL_M} \sum_{j \ge 0} (P^i_j)' (t)$. Moreover,
 $\sum_{i \in \cL_M} \sum_{j \ge 0} (P^i_j)'(t)=0$. Then,  $|P(t)|=|P(0)|=1$.

\section{Proof of Proposition~\ref{unicite_eds_sauts}}\label{app_proof_unicite_eds}
In fact, we can explicitly describe the solution of~\eqref{EDS_avec_sauts} as follows. We have $(X_0,Y_0)=(x_0,y_0)$. Then, up to the first jump of~$X$, $Y_t$ is necessarily defined as the unique strong solution of
the following SDE
\begin{equation*}
Y_t=y_0+\int_0^t b(s,x_0,Y_s)ds + \int_0^t \sigma(s,x_0,Y_s)dW_s, \ t\le T.
\end{equation*}
The process $(X_t,t\ge0)$ only increases by jumps of size~$1$, and has at most
$M$ jumps since $\lambda(t,M)=0$. These jumps may only occur at the times $T^n$
and a jump do occur at time~$T^n$ if the following condition is satisfied
$$U^n \le \frac{\underline{f}}{\overline{\lambda}\overline{f}}
\frac{\lambda(T^n-,X_{T^n-})f(Y_{T^n-})}{\varphi^{\Q}(T^n-,X_{T^n-})}. $$ 
Observe here that for any $x\in \cL_M$, $t\mapsto\varphi^{\Q}(t,x)$ is
càdlàg since the process $(X,Y)$ is càdlàg under~$\Q$, and
$\varphi^{\Q}(T^n-,X_{T^n-})$ is well defined.  If the jump occurs, we have
$$Y_{T^n}=Y_{T^n-}+\gamma(T^n-,X_{T^n-},Y_{T^n-}), $$ and $Y_t$ is defined up
to the next jump of~$X$ as the unique strong solution of the SDE $$Y_t=
Y_{T^n}+\int_{T_n}^t b(s,X_{T_n},Y_s)ds + \int_{T_n}^t
\sigma(s,X_{T_n},Y_s)dW_s, \  T_n \le t \le T.$$

\section{Proof of $\E[\sup_{t \in [0,T]} |Y^{1,N}_t|^p] < \infty$}\label{sect:moment_Y}
We assume that $Y^{1,N}$ satisfies (\ref{eq4}) and $X^{1,N}$ is a Poisson
process with intensity (\ref{eq3}). Then
\begin{align*}
  |Y^{1,N}_t|^p &\le 4^{p-1} \left( y_0^p+ \left|\int_0^t b(s,X^{1,N}_s,Y^{1,N}_s)ds\right|^p + \left|\int_0^t
      \sigma(s,X^{1,N}_s,Y^{1,N}_s)dW_s\right|^p \right. \\
    &\ \ +\left. \left|\int_0^t
      \gamma(s^-,X^{1,N}_{s^-},Y^{1,N}_{s^-})dX^{1,N}_s\right|^p\right) \\
 &\le 4^{p-1} \left( y_0^p+ T^{p-1} \int_0^t
   \left|b(s,X^{1,N}_s,Y^{1,N}_s)\right|^p ds + \sup_{u \le t}\left| \int_0^u
      \sigma(s,X^{1,N}_s,Y^{1,N}_s)dW_s\right|^p \right. \\ 
 &\ \ +\left. M^{p-1}\int_0^t
      |\gamma(s^-,X^{1,N}_{s^-},Y^{1,N}_{s^-})|^pdX^{1,N}_s\right) ,
\end{align*}
since~$X^{1,N}$ jumps at most $M$~times. The r.h.s being increasing w.r.t~$t$,
we can replace in the l.h.s. $|Y^{1,N}_t|^p$ by $\sup_{u \le t}|Y^{1,N}_u|^p$.
Burkholder-Davis-Gundy inequality yields to
$$\E\left[\left|\sup_{u \le t} \int_0^u
      \sigma(s,X^{1,N}_s,Y^{1,N}_s)dW_s\right|^p  \right] \le C_p T^{p/2-1} \E
  \left[ \int_0^t \left|\sigma(s,X^{1,N}_s,Y^{1,N}_s)\right|^p ds  \right].$$
On the other hand, $\E \left[ \int_0^t
      |\gamma(s^-,X^{1,N}_{s^-},Y^{1,N}_{s^-})|^pdX^{1,N}_s \right] \le
   \frac{\overline{\lambda} \overline{f}}{\underline{f}} \int_0^t\E
   \left[|\gamma(s^-,X^{1,N}_{s^-},Y^{1,N}_{s^-})|^p\right] ds$ since the jump
   intensity of~$X^{1,N}$ is upper bounded by~$\frac{\overline{\lambda}
   \overline{f}}{\underline{f}}$. Now since $b,\ \sigma,$
and~$\gamma$ have a sub linear growth with respect to~$y$ (see Hypothesis ~\eqref{hypo:continuous}), we get
\begin{align*}
  \E[\sup_{t \le T} |Y^{1,N}_t|^p] \le C\left(\int_0^T 1+ \E[\sup_{u \le s} |
    Y^{1,N}_u|^p] ds\right) ,
\end{align*}
which gives the result by Gronwall's lemma.

\section{Proof of Lemma~\ref{pi_tendue}}\label{App_tension}

The proof of this lemma is done in two steps. First, we claim that
$(\pi^N)_N$ is tight if and only if the sequence
$((X^{1,N}_t,Y^{1,N}_t)_{t\in[0,T]})_N$ is tight. To check this, we first
notice that $\cP(E)$ is a Polish space. By  Proposition~4.6
in~\cite{Meleard},  $(\pi^N)_N$ is tight if and only if
$(I(\pi^N))_N$ is tight. Then Prohorov's Theorem (tightness is
equivalent to sequential compactness) gives with equation~\eqref{eq_pi_N} the
claim.

Now, we must show that $((X^{1,N}_t,Y^{1,N}_t)_{t\in[0,T]})_N$ is tight. We use
Aldous' criterion.

First, we have to check that, for any $\varepsilon
>0$, there exists a constant $K$ such that $\P(\sup_{t\in[0,T]}
|X^{1,N}_t|+|Y^{1,N}_t|>K)\le \varepsilon$. This is trivial since $X^{1,N}_t$
is bounded and $\E[\sup_{t\in[0,T]}|Y^{1,N}_t|]<\infty$ (see Appendix \ref{sect:moment_Y}).
 Second, we have to check that for any $\varepsilon >0$,
 $\eta>0$, there exist $\delta>0$ and $n_0$ such that
$$\sup_{n\ge n_0} \sup_{\tau_1,\tau_2 \in \cT_{[0,T]}, \tau_1 \le
  \tau_2 \le \tau_1+\delta}
\P(|(X^{1,N}_{\tau_2},Y^{1,N}_{\tau_2})-(X^{1,N}_{\tau_1},Y^{1,N}_{\tau_1})|>\eta)<\varepsilon,$$
where $\cT_{[0,T]}$ denotes the set of stopping times taking values
in~$[0,T]$. We take $|(x,y)|=\max(|x|,|y|)$ and we assume without loss of
generality that $0<\eta<1$. For convenience, we introduce $\nu^{i,N}_t= \sum_{k=1}^{\infty} \ind{T^{i,k} \le
   t}$: this is a Poisson process with intensity
 $\frac{\overline{\lambda}\overline{f}}{\underline{f}}$. We distinguish
 the two cases: $\nu^{1,N}_{\tau_2}\ge\nu^{1,N}_{\tau_1}+1$ ($X^{1,N}$ jumps in
 $(\tau_1,\tau_2]$ ) and $\nu^{1,N}_{\tau_2}=\nu^{1,N}_{\tau_1}$ ($X^{1,N}$
 does not jump  in  $(\tau_1,\tau_2]$ ). We have:
\begin{align*}
  \P(|(X^{1,N}_{\tau_2},Y^{1,N}_{\tau_2})-(X^{1,N}_{\tau_1},Y^{1,N}_{\tau_1})|>\eta)\le&\P\left(\nu^{1,N}_{\tau_2}\ge\nu^{1,N}_{\tau_1}+1\right) \\
&+\P\left (|Y^{1,N}_{\tau_2}-Y^{1,N}_{\tau_1}|>\eta,
\nu^{1,N}_{\tau_2}=\nu^{1,N}_{\tau_1}\right):=P_1+P_2.
\end{align*}
Since $\P(\nu^{1,N}_{\tau_2}>
\nu^{1,N}_{\tau_1}) \le \E[\nu^{1,N}_{\tau_2}- \nu^{1,N}_{\tau_1}]=\frac{\overline{\lambda}
  \overline{f}}{\underline{f}}\E[\tau_2-\tau_1] $, we get $P_1\le \delta \frac{\overline{\lambda}
  \overline{f}}{\underline{f}}.$\\

$P_2$ is bounded by $\P(\int_{\tau_1}^{\tau_2}
b(s,X_s,Y_s) ds +\int_{\tau_1}^{\tau_2} \sigma(s,X_s,Y_s) dW_s >
\eta)$. Using Markov's inequality, we get
\begin{align*}
  P_2 \le \frac{1}{\eta^2}\E\left[\left|\int_{\tau_1}^{\tau_2}
b(s,X^{1,N}_s,Y^{1,N}_s) ds +\int_{\tau_1}^{\tau_2} \sigma(s,X^{1,N}_s,Y^{1,N}_s) dW_s\right|^2\right].
\end{align*}
Moreover
\begin{align*}
 & \E\left[\left|\int_{\tau_1}^{\tau_2}
b(s,X^{1,N}_s,Y^{1,N}_s) ds +\int_{\tau_1}^{\tau_2} \sigma(s,X^{1,N}_s,Y^{1,N}_s) dW_s\right|^2\right] \\
& \qquad \le
2\left(\E\left[\left|\int_{\tau_1}^{\tau_2}b(s,X^{1,N}_s,Y^{1,N}_s) ds \right|^2\right]
+ \E\left[\left|\int_{\tau_1}^{\tau_2} \sigma(s,X^{1,N}_s,Y^{1,N}_s) dW_s\right|^2\right]\right).
\end{align*}
On the one hand, we have $\left|\int_{\tau_1}^{\tau_2}b(s,X^{1,N}_s,Y^{1,N}_s)
  ds \right|^2\le \delta \int_{\tau_1}^{\tau_2} C(1+
|Y^{1,N}_s|^2 )ds$ for some constant~$C>0$ by Hypothesis~\ref{hypo:continuous}.
On the other hand, Burkholder-Davis-Gundy's inequality gives $\E\left[|\int_{\tau_1}^{\tau_2}
\sigma(s,X^{1,N}_s,Y^{1,N}_s) dW_s|^2\right]\le C \E \left[ \int_{\tau_1}^{\tau_2} 1+
|Y^{1,N}_s|^2 ds \right]$. Since $\E[\sup_{t\le T} |Y^{1,N}_t|^2] < \infty$, we get
$P_2 \le  \frac{C}{\eta^2}\delta$. Thus, combining the upper bounds on $P_1$ and $P_2$, Aldous' criterion is satisfied for $\delta:=\frac{\varepsilon}{\frac{\overline{\lambda}\overline{f}}{\underline{f}}+\frac{C}{\eta^2}}$.

\bibliographystyle{abbrvnat}
\bibliography{biblio}

\end{document}